\documentclass[12pt]{amsart}
\usepackage[margin=1in,includeheadfoot]{geometry}
\usepackage{amsmath}
\usepackage{slashed}
\usepackage{graphicx}

\usepackage[margin=1in,includeheadfoot]{geometry}
\usepackage{amsmath}
\usepackage{slashed}
\usepackage{graphicx}
\usepackage{mathrsfs}
\usepackage{txfonts}

\usepackage{amssymb,dsfont}
\usepackage{enumerate}
\usepackage{slashed}
\usepackage{fancyhdr}
\usepackage{aliascnt}
\usepackage{pdfsync}
\usepackage{hyperref}

\newtheorem{thm}{Theorem}[section]

\newtheorem{lem}[thm]{Lemma}
\newtheorem{prop}[thm]{Proposition}
\theoremstyle{definition}

\theoremstyle{remark}

\numberwithin{equation}{section}

\newcounter{stepnum}

\def\bee{\begin{eqnarray}}
	\def\beee{\begin{eqnarray*}}
		\def\eee{\end{eqnarray}}
	\def\eeee{\end{eqnarray*}}

\def\ba{\begin{array}}
	\def\ea{\end{array}}

\def\R{\mathbb R}

\makeatletter

\newcommand{\Rmnum}[1]{\expandafter\@slowromancap\romannumeral #1@}
\makeatother

\begin{document}
	
	\title[Super-Liouville equation with a Dirichlet boundary]{Super-Liouville equation with a Dirichlet-chiral boundary}

	\author[Liu]{Lei Liu}
	\address{School of Mathematics and Statistics, Key Laboratory of Nonlinear Analysis and Applications (Ministry of Education), Hubei Key Laboratory of Mathematical Sciences, Central China Normal
		University, Wuhan, 430079, People's Republic of China}%
	\email{leiliu2020@ccnu.edu.cn}
	
	\author[Zhu]{Mingyuan Zhu}
	\address{School of Mathematics and Statistics,  Central China Normal
		University, Wuhan, 430079, People's Republic of China}
	\email{2056396052@qq.com}

	\thanks{}

	\subjclass[2010]{}
	\keywords{Super-Liouville equation, Blow-up analysis, Dirichlet-chiral boundary}
	
	\date{\today}
	\begin{abstract}
		In this paper, we investigate the compactness problem for the super-Liouville equation under Dirichlet-chiral boundary conditions. We prove that no blow-up occurs in this setting.
	\end{abstract}
	\maketitle
	\section{Introduction}
	
The compactness of the solution space of a nonlinear partial differential equation plays an important role in studying the existence of solutions. For many interesting geometric PDEs, understanding their solution spaces remains a challenging problem. A powerful tool in this context is the so-called blow-up analysis, which provides valuable insight precisely when the solution space lacks compactness.

The classical Liouville equation for a real-valued function $u$ on a Riemann surface $(M,g)$ is given by
\begin{equation}\label{equ:17}
-\Delta_g u=2e^{2u}-K_g,\end{equation} which plays a fundamental role in many two dimensional physical models, complex analysis and differential geometry of Riemann surface, in particular in the problem of prescribed Gaussian curvature. The interior blow-up analysis for Liouville equation was  systematically developed by Brezis-Merle \cite{Brezis},  Li-Shafrir \cite{Li-Shafrir} and Li \cite{Li}, among others.

For the blow-up analysis of Liouville type equation near the Neumann boundary, Guo-Liu \cite{Guo-Liu} proved Brezis-Merle type concentration phenomenon and Li-Shafrir type quantization property. Later, Bao-Wang-Zhou \cite{Bao-Wang-Zhou} extended  Li's work \cite{Li} to the boundary case.  For Dirichlet boundary condition, Brezis-Merle \cite{Brezis} proved that $u_n$ is locally uniformly bounded in the	interior domain of $M$. In particular, blow-up cannot occur at an interior point. Later, Nagasaki-Suzuki \cite{Nagasaki-Suzuki-1990} obtained a uniform	boundary estimate which implies that the boundary blow-up is also	excluded.

In physics, Liouville equation also occurs naturally in string theory as discovered by Polyakov \cite{Polyakov}, from the gauge anomaly in quantizing the string action. There is also a natural supersymmetric version of the Liouville functional, coupling the bosonic scalar field to a fermionic spinor field. Motivated from supersymmetric string theory, Jost-Wang-Zhou \cite{Jost-Wang-Zhou} introduced the following super-Liouville equation for a real-valued function $u$ and a complex valued spinor $\psi$
\begin{align}\label{equ:32}
\begin{cases}
-\Delta_g u&=2e^{2u}-e^u\langle \psi,\psi\rangle-K_g,\\
-\slashed{D}_g\psi &=e^u\psi,
\end{cases} \ \ in \ \ M.
\end{align}
Here, $(M,g)$ is a Riemann surface with a fixed spin structure, $\Sigma M$  the spinor bundle over $M$ and $\langle\cdot,\cdot\rangle_{\Sigma M}$  the metric on $\Sigma M$. Choosing a local orthonormal basis ${e_\alpha,\alpha=1,2}$ on $M$, the  Dirac operator is
defined as $\slashed{D}_g:=e_\alpha\cdot\nabla_{e_\alpha}$, where $\nabla$ is the spin connection on $\Sigma M$ and $\cdot$ is the Clifford multiplication. This multiplication is  skew-adjoint:
\[
\langle X\cdot\psi,\varphi \rangle_{\Sigma M}=-\langle \psi,X\cdot\varphi \rangle_{\Sigma M}
\]
for any $X\in\Gamma(TM)$, $\psi,\ \varphi\in\Gamma(\Sigma M)$.

Similarly to Liouville equation, super-Liouville equation is also conformally invariant, which implies that the solution space is in general not compact. The blow-up analysis for super-Liouville equation in the interior case and Neumann boundary case were systematically developed in \cite{Jost-Wang-Zhou,Jost-Wang-Zhou-Zhu-1, Jost-Zhou-Zhu-2014} etc.

	In this paper, we study the blow-up analysis for the super-Liouville equation near a Dirichlet-chiral boundary. In particular, we show that the solution space of the Dirichlet-chiral boundary problem is compact. That is, no blow-up occurs in this setting.

For simplicity of notations, we assume $M=\Omega\subset\mathbb R^2$ is a bounded smooth domain, $u_0$ is a
	fixed smooth function in $\overline{\Omega}$. Let $(u_n,\psi_n)$ be a sequence of smooth solutions to
	\begin{equation}\label{equat:01}
		\begin{cases}
			-\Delta u_n= 2e^{2u_n} - e^{u_n} |\psi_n|^2 \ \ &in\ \  \Omega,\\
			-\slashed{D}\psi_n= e^{u_n}\psi_n \ \ &in\ \ \ \Omega,\\
			u_n=u_0  \ \ &on\ \  \partial  \Omega,\\
			\mathbf{B}\psi_n=0 \ \ &on\ \  \partial  \Omega,
		\end{cases}
	\end{equation}where $$\mathbf{B}\psi_n:=\frac{1}{2}(I-ie_1\cdot)\psi_n=\frac{1}{2}(\psi_n -ie_1\cdot\psi_n)$$ is the chiral boundary and $e_1$ is the tangent vector field of $\partial \Omega$. For more details on chiral boundary condition, we refer to \cite{Gibbons,Hijazi}.

	Define the blow-up set
	\begin{align*}
		\Sigma_1&:=\{x\in \overline{\Omega}\ |\ \mbox{there is a sequence of points } x_n\to x\mbox{ such that } \limsup_{n\to\infty}u_n(x_n) =+\infty\},\\
		\Sigma_2&:=\{x\in \overline{\Omega}\ |\ \mbox{there is a sequence of points } x_n\to x\mbox{ such that } \limsup_{n\to\infty}|\psi_n(x_n)| =+\infty\}.
	\end{align*}
Denote
	$$
	\Sigma=\Sigma_1\cup\Sigma_2
	$$ and
$$
	E(u_n,\psi_n;\Omega)
	:=
	\int_\Omega
	\left(
	e^{2u_n}+|\psi_n|^4
	\right)dx.
	$$
	
\

	Our main result is the following.
	
	\begin{thm}\label{thm:main}
		Let $(u_n,\psi_n)$ be a sequence of smooth solutions of
		\eqref{equat:01} with finite energy
		$$
		E(u_n,\psi_n;\Omega)\leq C.
		$$
		Then $		\Sigma=\emptyset.$
	\end{thm}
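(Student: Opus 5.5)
Suppose for contradiction that $\Sigma\neq\emptyset$. The argument follows the standard scheme for the super-Liouville equation: first an $\varepsilon$-regularity statement up to the boundary, which forces $\Sigma$ to be finite and concentrated; then a blow-up at a point of $\Sigma$, producing an entire solution whose existence contradicts either the interior Brezis--Merle bound or the Dirichlet boundary condition.

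\emph{Step 1 ($\varepsilon$-regularity).} There is $\varepsilon_0>0$ such that if $E(u_n,\psi_n;B_r(x)\cap\Omega)<\varepsilon_0$, then $u_n^+$ and $\psi_n$ are uniformly bounded on $B_{r/2}(x)\cap\Omega$. The spinor part is handled as follows.
\begin{itemize}
\item The chiral condition $\mathbf B\psi_n=0$ makes the operator $\slashed D$ with boundary operator $\mathbf B$ elliptic.
\item Hence one has the estimate $\|\psi\|_{W^{1,p}}\le C(\|\slashed D\psi\|_{L^p}+\|\psi\|_{L^p})$ on half-balls, for $\psi$ satisfying $\mathbf B\psi=0$.
\item Smallness of $\|e^{u_n}\|_{L^2}$ and $\|\psi_n\|_{L^4}$ lets us bootstrap: $\psi_n$ is bounded in $L^q$ for every $q<\infty$, provided $e^{u_n}$ is in $L^s$ for some $s>2$.
\end{itemize}
The bound on $e^{u_n}$ comes from the scalar equation. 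Write $u_n=w_n+h_n$, where
\begin{itemize}
\item $w_n$ solves $-\Delta w_n=2e^{2u_n}-e^{u_n}|\psi_n|^2$ with $w_n=0$ on the flat boundary piece;
\item $h_n$ is harmonic with $h_n=u_0$ there.
\end{itemize}
Brezis--Merle's inequality (valid for zero Dirichlet data) gives $e^{k|w_n|}\in L^1$ for some $k>2$ when the $L^1$ norm of the right-hand side is small. The mean value property together with the Dirichlet data controls $h_n^+$. Iterating yields $L^\infty$ bounds. Consequently $\Sigma$ is contained in the finite set of points where at least $\varepsilon_0$ energy concentrates.

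\emph{Step 2 (interior points).} Exclude $x_0\in\Sigma\cap\Omega$. Here I would argue that interior concentration is incompatible with the Dirichlet condition globally. Let $v_n$ solve $-\Delta v_n=2e^{2u_n}-e^{u_n}|\psi_n|^2$ in $\Omega$ with $v_n=u_0$ on $\partial\Omega$; then $v_n=u_n$. Near $\partial\Omega$, away from $\Sigma$, the $W^{1,q}$ estimates for $q<2$ with measure data give boundedness.

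If honest interior bubbling happened, rescaling $\tilde u_n(x)=u_n(x_n+\lambda_nx)+\log\lambda_n$ and $\tilde\psi_n=\lambda_n^{1/2}\psi_n(x_n+\lambda_nx)$ produces an entire finite-energy super-Liouville solution. Its energy quantization ($4\pi$ per bubble, as in Jost--Wang--Zhou) yields $u_n\to-\infty$ locally uniformly away from $\Sigma$. This contradicts $u_n=u_0$ on $\partial\Omega$ combined with Harnack-type control: the Green representation gives $u_n\ge -C$ near the boundary, since the negative part of the measure $e^{u_n}|\psi_n|^2$ is controlled there by Step 1. Hence interior blow-up is impossible. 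This mirrors the Brezis--Merle dichotomy: either $u_n$ is locally bounded, or it tends to $-\infty$ away from $\Sigma$, and the second alternative is ruled out by the boundary values.

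\emph{Step 3 (boundary points, the main obstacle).} Exclude $x_0\in\Sigma\cap\partial\Omega$. Flatten the boundary and rescale at the maximum points as above, letting $d_n=\mathrm{dist}(x_n,\partial\Omega)$.
\begin{itemize}
\item If $d_n/\lambda_n\to\infty$, the limit is an entire bubble, excluded as in Step 2.
\item If $d_n/\lambda_n$ stays bounded, the limit lives on a half-plane $\mathbb R^2_+$. Since $u_n-\log\lambda_n^{-1}\to-\infty$ on the boundary because $u_0$ is bounded, the limit has $\tilde u=-\infty$ on $\partial\mathbb R^2_+$. This contradicts local $C^1$ convergence: the rescaled boundary values $u_0+\log\lambda_n\to-\infty$, while the interior maximum is $0$ at distance $O(1)$, and $\tilde u_n$ is bounded in $W^{1,q}$.
\end{itemize}
A cleaner way to carry out Step 3 is a Pohozaev identity on $B_r(x_0)\cap\Omega$ in the spirit of Nagasaki--Suzuki. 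The Dirichlet condition gives $\partial_\tau u_n=\partial_\tau u_0$ on the boundary, and $\mathbf B\psi_n=0$ kills the spinor boundary terms $\langle\partial_\nu\cdot\psi_n,\psi_n\rangle$. One then gets a uniform bound on $\int_{\partial\Omega\cap B_r}|\partial_\nu u_n|^2$. Combined with the Kelvin/moving-plane argument, this gives a uniform neighborhood of $\partial\Omega$ free of maxima, hence $\Sigma\cap\partial\Omega=\emptyset$. Checking that the chiral condition makes every spinor boundary term in the Pohozaev identity vanish is the step I expect to be delicate.
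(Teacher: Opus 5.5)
You follow the paper in Steps 1--2 and in the bounded-$d_n/\lambda_n$ case (Lemmas~\ref{lem:small-energy-regul}, \ref{lem:compactness-away-blowup}, \ref{lem:distance-to-boundary-scale}). The genuine gap is the remaining boundary case, $d_n\to0$ with $d_n/\lambda_n\to\infty$, which is the heart of the theorem. It cannot be ``excluded as in Step 2''. In Step 2 the contradiction comes from interior concentration at $p$ forcing $u_n\to-\infty$ on a punctured neighbourhood of $p$. Otherwise the concentrated mass would produce, through $G(\cdot,p)$, a logarithmic singularity in the limit. That behaviour clashes with the bounds coming from the Dirichlet data. When the concentration point $x_0$ lies on $\partial\Omega$, this mechanism is absent: $G(x,y)\to0$ as $y\to x_0$, so the concentrated mass leaves no trace in the limit. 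Indeed, in the proof of Lemma~\ref{lem:boundary-radial-gradient-vanishing} the paper shows that $u_n\to u$ in $C^2_{loc}(\overline{B_1^+}\setminus\{0\})$, with $u-w=\int_\Omega G(\cdot,y)(2e^{2u}-e^u|\psi|^2)\,dy$ and $u=u_0$ on the flat boundary. So there is no $-\infty$ alternative and nothing that contradicts the boundary values. The bubble is felt only through the boundary flux $\partial_\nu u_n$, which is why a Pohozaev argument is needed.

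Your fallback does not fill this gap.
\begin{itemize}
\item Moving planes and the Kelvin transform rely on comparison principles for a scalar equation $-\Delta u=f(u)$. Here the right-hand side involves the spinor and has no sign, so they are unavailable; the paper stresses exactly this.
\item You do not explain how an $L^2$ bound on $\partial_\nu u_n$ would yield the contradiction.
\item A Pohozaev identity that produces such a bound needs a vector field normal to the boundary. That leaves the flat-boundary term $\mathrm{Re}\langle\psi_n,e_1\cdot\partial_s\psi_n\rangle$, because chirality only annihilates terms of the form $\langle\psi,e_2\cdot\varphi\rangle$ ($e_2$ the normal). The reason is that $e_2\cdot$ exchanges the eigenspaces of $ie_1\cdot$, while $e_1\cdot$ preserves them.
\end{itemize}

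The ingredients you are missing are the following.
\begin{itemize}
\item[(i)] The Pohozaev identity of Lemma~\ref{lem:boundary-pohozaev-identity}, taken with the radial field $x$, which is tangent to $\partial^0B_r^+$. The reflection $\widehat\psi_n(x)=ie_1\cdot\psi_n(\bar x)$ then removes all flat spinor terms. The Dirichlet term $\int_{\partial^0B_r^+}s\,\partial_su_0\,\partial_\nu u_n\,ds$ is $O(r)$ by the $L^1$ bound on $\partial_\nu u_n$, which comes from Hopf's lemma and the Green representation.
\item[(ii)] Vanishing of the curved-boundary terms along good radii $r_j\to0$ (Lemmas~\ref{lem:boundary-decay-estimates}, \ref{lem:boundary-radial-gradient-vanishing}).
\item[(iii)] The lower bound $\liminf_n\int_{B_r^+}(2e^{2u_n}-e^{u_n}|\psi_n|^2)\,dx\ge4\pi$. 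This is not automatic, since the density has no sign. It needs the bubble classification (mass $4\pi$) together with the vanishing of $\int|\psi_n|^4$ on the neck (Proposition~\ref{prop:neck-spinor-energy-vanishing}, via Ding--Tian's decomposition and Lemma~\ref{lem:boundary-annular-spinor-estimate}).
\end{itemize}

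A smaller point concerns the bounded-$d_n/\lambda_n$ case. A $W^{1,q}$ bound with $q<2$ gives no pointwise control, so it does not carry the boundary values $u_0+\log\lambda_n\to-\infty$ to the maximum point at distance $O(1)$. You need a boundary Harnack estimate as in Lemma~\ref{lem:small-energy-regul}(2).
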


\

Compared to the Liouville equation, the presence of an additional spinor term in the model means that the term $2e^{2u_n}-e^{u_n}|\psi_n|^2$ does not have a fixed sign. Consequently, the maximum principle and the boundary estimates used for the classical Liouville equation cannot be applied directly. To prove Theorem \ref{thm:main}, we firstly  apply the interior blow-up theory for the super-Liouville equation developed in \cite{Jost-Wang-Zhou-Zhu-1} and show that no interior blow-up points exist. See Lemma \ref{lem:compactness-away-blowup}.

Second, to rule out boundary blow-up, we establish the following Pohozaev-type identity near a Dirichlet-chiral boundary (see Lemma \ref{lem:boundary-pohozaev-identity})
\begin{align}\label{equat:06}
			&\int_{B_r^+}
			\left(
			2e^{2 u_n}
			-
			e^{ u_n}|\psi_n|^2
			\right)\,dx
			\notag\\
			&\quad =
			r\int_{\partial^+B_r^+}
			\left(
			\left|\frac{\partial u_n}{\partial\nu}\right|^2
			-\frac12|\nabla  u_n|^2
			+e^{2  u_n}
			\right)\,d\sigma
			-\frac14
			\int_{\partial^+B_r^+}
			\left\langle
			\frac{\partial \psi_n}{\partial\nu},
			(x+\bar x)\cdot \psi_n
			\right\rangle\,d\sigma
			\notag\\
			&\qquad
			-\frac14
			\int_{\partial^+B_r^+}
			\left\langle
			(x+\bar x)\cdot \psi_n,
			\frac{\partial \psi_n}{\partial\nu}
			\right\rangle\,d\sigma
			+\int_{\partial^0B_r^+}
			s\,\frac{\partial u_0}{\partial s}(s,0)\,
			\frac{\partial  u_n}{\partial\nu}(s,0)\,ds .
		\end{align}
Then, on one hand, using potential analysis, we show that the right-hand side of \eqref{equat:06} tends to zero along a sequence of special slices $r_j\to 0$ (see Lemma \ref{lem:boundary-decay-estimates} and Lemma \ref{lem:boundary-radial-gradient-vanishing}). On the other hand, we prove that if blow-up occurs, then the concentrated energy must be greater than $4\pi$ (where $4\pi$ is exactly the energy of a single bubble) i.e.
\begin{equation}\label{equat:07}
\liminf_{n\to\infty}\int_{B_r^+}
			\left(
			2e^{2 u_n}
			-
			e^{ u_n}|\psi_n|^2
			\right)\,dx\geq 4\pi,\ \ \forall\ r>0.\end{equation} Then we get a contradiction.

For \eqref{equat:07}, using the small-energy regularity lemma (Lemma \ref{lem:small-energy-regul}), we first show that all bubbles near a boundary blow-up point are entire finite-energy solutions of the super-Liouville equation on $\R^2$. Second, we prove that the energy $\int_{B_r^+}\left(2e^{2 u_n}-e^{u_n}|\psi_n|^2\right)\,dx$ is greater than the energy of a single bubble ($4\pi$). We remark that this is trivial for the Liouville equation, since its energy density $2e^{2u_n}$ is positive. For the super-Liouville equation, however, the energy density  $2e^{2u_n}-e^{u_n}|\psi_n|^2$ does not have a fixed sign. Here, following the reduction argument of Ding-Tian \cite{DingWeiyueandTiangang} and the annular estimate in \cite[Lemma~3.1]{Jost-Zhou-Zhu-2014}, we establish the following energy identity for the spinor under a Dirichlet-chiral boundary condition
	$$
	\lim_{\delta\to0}
	\lim_{R\to+\infty}
	\limsup_{n\to\infty}
	\int_{\mathcal N_{\delta,R,n}}
	|\psi_n|^4\,dx
	=0.
	$$
With the help of above spinor's energy identity, we can show \eqref{equat:07}. See Sec. \ref{sec:proof-of-theorem}.

	We point out that the above proof strategy is not essentially restricted
	to planar domains. The same argument can be adapted, with only notational
	modifications, to the corresponding super-Liouville system on a compact
	spin Riemann surface with smooth boundary. Indeed, near an interior point
	or a boundary point, one may use an isothermal coordinate chart or a
	boundary isothermal coordinate chart, respectively. By the conformal
	covariance of the super-Liouville system and the chirality boundary
	condition, the local problem is reduced to the Euclidean form considered
	above, with smooth transformed Dirichlet data. Consequently, the
	small-energy regularity estimates, the blow-up rescaling, the neck
	analysis and the Pohozaev argument carry over without essential changes.
	We formulate the result on a bounded domain in $\mathbb R^2$ only to
	simplify the notation.

\

\noindent\textbf{Notations:} Let $B_r(x_0)$ be the ball in $\mathbb{R}^2$ with radius $r$ centered at $x_0$. Let $\partial B_r(x_0)$ be the boundary
of $B_r(x_0)$. Denote
\begin{equation*}
  B_r^+(x_0):=\left\{
    x = (s,t)\in B_r(x_0):t>0
  \right\}, \ \ B_r^-(x_0):=\left\{
    x = (s,t)\in B_r(x_0):t<0
  \right\}
\end{equation*}
and
\begin{equation*}
  \partial^0B^+_r(x_0):=\left\{
    x = (s,t)\in \partial B^+_r(x_0):t=0
  \right\}, \  \partial^+B^+_r(x_0):=\left\{
    x = (s,t)\in \partial B^+_r(x_0):t>0
  \right\}.
\end{equation*}
Denote
\begin{equation*}
  \mathbb{R}^2_a :=\left\{
    x = (s,t)\in \mathbb{R}^2:t>-a
  \right\}
\end{equation*}
for some $a\ge 0$ and
\begin{equation*}
  \partial \mathbb{R}^2_a :=\left\{
    x = (s,t)\in \mathbb{R}^2:t=-a
  \right\}
\end{equation*}
For simplicity of notations, we always denote $B_1(0)$, $B_1^+(0)$, $B_1^-(0)$, $B_R(0)$, $B_R^+(0)$, $\mathbb{R}^2_0$ by $B$, $B^+$, $B^-$, $B_R$, $B^+_R$, $\mathbb{R}^2_+$ respectively.
We also denote $\bar{x} = (s,-t)\in\mathbb{R}^2$ for $x=(s,t)\in\mathbb{R}^2$.

\
	
	The rest of paper is organized as follows. In Section~2, we establish the small
	energy regularity result and several local estimates which will be used
	in the blow-up analysis. We also derive the Pohozaev type identity and
	prove the estimates for the corresponding boundary terms. In Section~3,
	we study the blow-up scale and the neck region, and then complete the
	proof of Theorem~\ref{thm:main}.

	\section{Some basic lemmas}
	
	\
	
	In this section, we prove several basic lemmas for the super-Liouville equation near the Dirichlet-chiral boundary, including a small-energy regularity lemma, a Pohozaev-type identity, and related results.

	\
	
 We first recall the following interior small energy regularity
	estimate.
	
	\begin{lem}[Lemma 4.4 in \cite{Jost-Wang-Zhou}]\label{lem:interior-small-energy-regul}
		Let $(u_n,\psi_n)$ be a
		sequence of smooth solutions to
		\[
		\begin{cases}
			-\Delta u_n
			=
			2e^{2u_n}-e^{u_n}|\psi_n|^2
			&\text{in } B_1,\\
			\slashed{D}\psi_n
			=
			-e^{u_n}\psi_n
			&\text{in } B_1.
		\end{cases}
		\]
		Assume that
		\[
		\int_{B_1}|\psi_n|^4\,dx<+\infty.
		\]
		Then there exists a constant $0<\epsilon_0<\pi$ such that if
		\[
		\int_{B_1}e^{2u_n}\,dx<\epsilon_0,
		\]
		there holds
		\[
		\|u_n^+\|_{L^\infty(B_{\frac 14})}+\|\psi_n\|_{L^\infty(B_{\frac 14})}\leq C.
		\]
	\end{lem}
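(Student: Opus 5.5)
This lemma is quoted from \cite{Jost-Wang-Zhou}, so in the paper it is simply invoked. To prove it directly, I would follow the Brezis--Merle approach for the scalar equation and combine it with a bootstrap for the Dirac equation.

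\emph{Step 1: a Dirac estimate.} Write $\slashed{D}\psi_n=-e^{u_n}\psi_n$. By Hölder,
\[
\|e^{u_n}\psi_n\|_{L^{4/3}}\le \|e^{u_n}\|_{L^2}\,\|\psi_n\|_{L^4}\le \epsilon_0^{1/2}\,\|\psi_n\|_{L^4}.
\]
Fix a cutoff $\eta$ and apply elliptic $L^p$ theory for $\slashed{D}$ to $\eta\psi_n$. The map $W^{1,4/3}\hookrightarrow L^4$ is bounded in dimension two, and the commutator term $\nabla\eta\cdot\psi_n$ is lower order, bounded by $\|\psi_n\|_{L^4(B_1)}$. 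This gives a local $L^4$ bound for $\psi_n$ on a smaller ball in terms of $\|\psi_n\|_{L^4(B_1)}$, the latter uniformly bounded by hypothesis (presumably uniformly in $n$).

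\emph{Step 2: the scalar equation.} Split $u_n=u_n^1+u_n^2$ on $B_{1/2}$. Here $u_n^1$ solves
\[
-\Delta u_n^1=2e^{2u_n}-e^{u_n}|\psi_n|^2, \qquad u_n^1=0 \text{ on } \partial B_{1/2},
\]
and $u_n^2$ is harmonic. The right-hand side is bounded in $L^1$ by
\[
2\epsilon_0+\|e^{u_n}\|_{L^2}\,\|\psi_n\|_{L^4}^2,
\]
which is small only in its first term. Since $\epsilon_0<\pi$, I would handle the positive part by Brezis--Merle: $e^{2|w|}\in L^{p}$ for some $p>1$ whenever $\|\Delta w\|_{L^1}<2\pi$. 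The negative part $-e^{u_n}|\psi_n|^2\le 0$ contributes to $u_n^1$ only a subharmonic-type piece bounded above, so it only helps for $u_n^+$. For the harmonic part, the mean value inequality controls $(u_n^2)^+$ by $\int (u_n^2)^+\le \int e^{2u_n}+\int|u_n^1|$. Altogether $e^{2u_n}$ is bounded in $L^{p}(B_{1/3})$ for some $p>1$.

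\emph{Step 3: bootstrap.} With $e^{u_n}\in L^{2p}$, the Dirac equation improves $\psi_n$ to $L^{q}$ for large $q$ through repeated application of $W^{1,r}$ estimates. Then $\Delta u_n^+$ lies in $L^{s}$ for some $s>1$, and $u_n^+$ is bounded on $B_{1/4}$ by standard elliptic estimates (Harnack/weak Harnack for subsolutions, using $-\Delta u_n\le 2e^{2u_n}$). The $L^q$ bounds then push $\psi_n$ into $W^{1,q}\subset L^\infty$.

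The main obstacle is Step 2: the sign-indefinite term. The $\psi$-part of the Laplacian is not small, so Brezis--Merle must be applied only to the positive part $2e^{2u_n}$. The negative part has to be handled one-sidedly, since only an upper bound on $u_n$ is sought.
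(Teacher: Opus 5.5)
Your proof is correct, but it handles the sign-indefinite term differently from the paper. The paper only cites this lemma. Its proof of the boundary analogue, Lemma~\ref{lem:small-energy-regul}, shows the intended route, which has three steps:
\begin{enumerate}
\item Absorb $\|e^{u_n}\|_{L^2}\le\sqrt{\epsilon}$ in the Dirac estimate (with $p=\frac{16}{9}$) to get $\psi_n\in L^{16}$ before anything is known about $u_n$.
\item Bound the \emph{whole} right-hand side in $L^1$ by $2\epsilon+\sqrt{\epsilon}\,\|\psi_n\|_{L^4}^2$. This is small once $\epsilon$ is allowed to depend on $\sup_n\|\psi_n\|_{L^4}$, so your remark that it is small ``only in its first term'' holds only if you insist that $\epsilon_0$ be independent of that bound.
\item Apply Brezis--Merle to the full Green potential to obtain $e^{8|u_n^1|}\in L^1$.
\end{enumerate}

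You instead exploit $-\Delta u_n\le 2e^{2u_n}$. Split $u_n^1=a_n-b_n$ with $-\Delta a_n=2e^{2u_n}$, $-\Delta b_n=e^{u_n}|\psi_n|^2$ and zero boundary values. Then $b_n\ge 0$, so $u_n\le a_n+C$ on $B_{1/3}$. Brezis--Merle applied to $a_n$ alone (mass $<2\epsilon_0<2\pi$) gives $e^{2u_n}\in L^p$ for some $p>1$.

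Each route buys something different:
\begin{itemize}
\item Your route genuinely uses the threshold $\pi$. It proves the lemma for every $\epsilon_0<\pi$, with $\epsilon_0$ independent of the spinor's $L^4$ bound. In the paper's route the condition $\epsilon_0<\pi$ plays no real role.
\item The paper's route never uses the sign of the spinor term. It gets strong two-sided integrability of $u_n^1$ in one step, at the price of an $\epsilon$ that depends on $\sup_n\|\psi_n\|_{L^4}$.
\end{itemize}

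Two minor points. First, your Step~1 only reproduces an $L^4$ bound you already assume. Second, the Dirac bootstrap in Step~3 is unnecessary. Once $e^{2u_n}\in L^p$ with $p>1$, the local maximum principle for subsolutions (not weak Harnack) applied to $-\Delta u_n\le 2e^{2u_n}$ bounds $u_n^+$ on $B_{1/4}$ directly. A single Dirac estimate with the now bounded coefficient $e^{u_n}$ then gives $\psi_n\in W^{1,4}\subset L^\infty$.
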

	
\
	
	We next prove the corresponding boundary small energy regularity	estimate.	
	\begin{lem}\label{lem:small-energy-regul}
		Let $(u_n,\psi_n)$ be a sequence of smooth solutions of
		\begin{equation}\label{equat:02}
			\begin{cases}
				-\Delta u_n= 2e^{2u_n} - e^{u_n} |\psi_n|^2 \ \ &in\ \  B^+_1(0),\\
				\slashed{D}\psi_n= -e^{u_n}\psi_n \ \ &in\ \ \ B^+_1(0),\\
				u_n=\phi_n  \ \ &on\ \  \partial ^0B^+_1(0),\\
				\mathbf{B}\psi_n=\mathbf{B}\varphi_n  \ \ &on\ \  \partial ^0B^+_1(0),
			\end{cases}
		\end{equation} with
		\begin{align*}
			\int_{B^+}|\psi_n|^4dx+\|\mathbf{B}\varphi_n\|_{C^0(\partial^0B^+)}+ osc_{\partial^0B^+}\phi_n\leq C.
		\end{align*}
		Then there exists a positive constant $\epsilon_1$, such that if $$\int_{B^+}e^{2u_n}dx\leq\epsilon_1,$$ the following properties hold:
		\begin{itemize}
			\item[(1)] If
			$
			\sup_n\phi_n(0)<+\infty,
			$
			then
			$$
			\|u_n^+\|_{L^\infty(B^+_{\frac12})}
			+
			\|\psi_n\|_{L^\infty(B^+_{\frac12})}
			\leq C.
			$$
			
			\item[(2)] If $\lim_{n\to\infty}\phi_n(0)=-\infty$, then $u_n\to -\infty$ uniformly in $B^+_{\frac{1}{2}}$ as $n\to\infty$.
		\end{itemize}
		
	\end{lem}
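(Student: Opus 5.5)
We follow the scheme of Brezis--Merle and of the interior result Lemma~\ref{lem:interior-small-energy-regul}: first bound the spinor with a boundary elliptic estimate for the chiral Dirac operator, then bound $u_n^+$ by splitting $u_n$ into a Newtonian-type part and a harmonic part. The smallness $\int_{B^+}e^{2u_n}\le\epsilon_1$ is what makes both steps work.

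\textbf{Step 1 (spinor).} The Dirac operator with chiral boundary condition $\mathbf{B}$ is elliptic. Hence for a cutoff $\eta$ supported in $B^+_{r}$, $r<1$, we have the estimate
\[
\|\eta\psi_n\|_{W^{1,p}(B^+)}\le C\big(\|\slashed{D}(\eta\psi_n)\|_{L^p}+\|\mathbf{B}(\eta\psi_n)\|_{W^{1-1/p,p}(\partial^0B^+)}\big).
\]
Take $p=4/3$ and use $\slashed{D}(\eta\psi_n)=-\eta e^{u_n}\psi_n+\nabla\eta\cdot\psi_n$. Then
\[
\|\eta e^{u_n}\psi_n\|_{L^{4/3}}\le\|e^{u_n}\|_{L^2(B^+)}\|\eta\psi_n\|_{L^4}\le \epsilon_1^{1/2}C\|\eta\psi_n\|_{W^{1,4/3}}
\]
by Sobolev embedding. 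For $\epsilon_1$ small this term is absorbed. The boundary data $\mathbf{B}\varphi_n$ is only assumed $C^0$-bounded, so we first subtract a fixed extension of it (or reduce to the homogeneous problem $\mathbf{B}\psi=0$ plus a harmonic-spinor correction). We then bootstrap, $L^4\to L^q$ for every $q<\infty$, on slightly smaller half-balls.

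\textbf{Step 2 ($u_n$).} Write $u_n=v_n+w_n$, where
\[
-\Delta v_n=2e^{2u_n}-e^{u_n}|\psi_n|^2 \ \text{ in } B^+_{3/4},\qquad v_n=0 \ \text{ on } \partial B^+_{3/4},
\]
and $w_n$ is harmonic with boundary values $\phi_n$ on the flat part and $u_n$ on the curved part. The right-hand side has small $L^1$ norm: by Step 1, $\|e^{u_n}|\psi_n|^2\|_{L^1}\le \epsilon_1^{1/2}\|\psi_n\|_{L^4}^2$ is small. Brezis--Merle's inequality on the half-ball (odd reflection of the Green function) then gives $\int e^{k|v_n|}\le C$ for some $k>2$, provided $\epsilon_1$ is small.

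For $w_n$ we control $w_n^+$ by the mean value property after odd reflection of $w_n-\phi_n(0)$. The flat-part data satisfies $|\phi_n-\phi_n(0)|\le \mathrm{osc}\,\phi_n\le C$. On the curved part, $\int e^{2u_n}\le\epsilon_1$ gives $\int (u_n)^+\le C$, and Jensen-type arguments yield an upper bound of $w_n^+$ in terms of $\max(\phi_n(0),0)+C$. Combining, $e^{u_n}\in L^{q}$ for some $q>2$ on $B^+_{5/8}$. Elliptic $W^{2,q}$ estimates for $v_n$, together with the harmonic estimate for $w_n$, then give $\|u_n^+\|_{L^\infty(B^+_{1/2})}\le C$ in case (1). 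Feeding $e^{u_n}\in L^\infty$ back into Step 1 gives $\|\psi_n\|_{L^\infty(B^+_{1/2})}\le C$.

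\textbf{Case (2) and main obstacle.} If $\phi_n(0)\to-\infty$, the upper bound on $w_n$ becomes $w_n\le \phi_n(0)+C$ on $B^+_{1/2}$, while $v_n$ is uniformly bounded. Hence $u_n\to-\infty$ uniformly there.

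The delicate point is the harmonic part $w_n$. The curved-boundary values are only controlled in $L^1$ from above, and the flat-part data only through its oscillation relative to $\phi_n(0)$. I would first try a Harnack argument for $w_n-\phi_n(0)$: reflect the function $w_n-h_n$, where $h_n$ is the harmonic extension of the flat-part data $\phi_n$, so that it vanishes on the flat part. For this function the $L^1$ bound on the positive part, coming from $\int e^{2u_n}$, gives a sup bound on the interior half-ball.
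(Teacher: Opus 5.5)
\textbf{There is a genuine gap: case (2) rests on a false inequality, and the harmonic-part upper bound is obtained by the wrong device.} Your decomposition is the paper's one. Your $v_n$ plays the role of $u_n^1$, your $h_n-\phi_n(0)$ is $u_n^2$, and $w_n-(h_n-\phi_n(0))$ is $u_n^3$. The trouble lies in the two mechanisms you use for the harmonic part.

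\textbf{Case (2).} The bound $w_n\le\phi_n(0)+C$ on $B^+_{1/2}$ does not follow from your Step~2, which only gives $w_n\le C$ once $\phi_n(0)<0$. It is also false. Take $\psi_n\equiv0$, $\phi_n\equiv-n$ on $\partial^0B^+$ and $u_n=-K$ on $\partial^+B^+$ with $K$ large. Let $H_n$ be the harmonic extension of these data. Sub/supersolutions give a Liouville solution with $H_n\le u_n\le H_n+\frac14$, tiny energy, and $v_n=O(1)$. At $x=(0,\frac14)$ one gets $w_n(x)=-\omega n-(1-\omega)K+O(1)$, where $\omega\in(0,1)$ is the harmonic measure of the flat part seen from $x$. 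Hence $w_n(x)-\phi_n(0)\to+\infty$.

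What is true, and sufficient, is $w_n\le c\,\phi_n(0)+C$ for some fixed $c\in(0,1]$. Proving it needs an ingredient your proposal lacks: a uniform positive lower bound on the harmonic measure of the flat boundary seen from $\overline{B^+_{1/2}}$, combined with a uniform upper bound $w_n\le C$ on a larger half-ball. The paper gets this from the $p=1$ boundary weak Harnack inequality (\cite[Theorem~8.26]{GilbargTrudinger}). It applies it to the nonnegative harmonic function $C_0-u_n^3$, whose flat-boundary value is $C_0-\phi_n(0)$, and then uses an interior Harnack chain. Alternatively, compare $w_n$ on $B^+_{5/8}$ with $C+\omega\,\phi_n(0)$, where $\omega$ is the harmonic measure of $\partial^0B^+_{5/8}$ and satisfies $\omega\ge c_0>0$ on $\overline{B^+_{1/2}}$.

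\textbf{The uniform upper bound.} Your bound $w_n\le\max(\phi_n(0),0)+C$ is correct, but odd reflection does not prove it.
\begin{itemize}
\item Odd reflection of $f=w_n-h_n$ has positive part $f^-(\bar x)$ on $B^-$. Now $\int f^-$ is unbounded exactly when $u_n$ is very negative on the curved arc, so the mean value inequality gives nothing.
\item $\int f^+$ is itself unbounded when $\phi_n(0)\to-\infty$, since then $-h_n\approx-\phi_n(0)$. This is allowed in (1) and is the whole situation in (2).
\item ``Jensen on the curved part'' fails because $\int_{B^+}e^{2u_n}$ does not control the trace of $u_n$ on a fixed arc.
\end{itemize}
The paper's fix is to subtract a \emph{fixed} constant $C\ge\sup_n\phi_n(0)$ rather than $\phi_n(0)$ or $h_n$. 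Then $u_n^3-C$ is harmonic and $\le0$ on $\partial^0B^+$, and $(u_n^3-C)^+\le u_n^++|u_n^1|+|u_n^2|$ is bounded in $L^1$. One then applies the boundary local maximum principle \cite[Theorem~9.26]{GilbargTrudinger}. Equivalently, extend the positive part by zero, not oddly; the extension is subharmonic across $\partial^0B^+$.

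\textbf{Minor point in Step~1.} With $p=\frac43$ the absorption only reproduces the $L^4$ bound you started from. To gain integrability you need $p\in(\frac43,2)$, with $\epsilon_1$ depending on $p$; the paper takes $p=\frac{16}{9}$. Also, ``every $q<\infty$'' is not available before $e^{u_n}$ has been improved.
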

	\begin{proof}
		Extend $\phi_n$ continuously to $\partial B^+$ such that $$ osc_{\partial B^+}\phi_n\leq osc_{\partial^0 B^+}\phi_n\leq C.$$ Since $\sup_n\phi_n(0)<+\infty$, we have
		$\phi_n(0)\leq C$.
		
		\
		
		\noindent\textbf{Step 1.} Estimate for $\|\psi_n\|_{L^{16}(B^+_{\frac{3}{4}})}$.
		
		\
		
		Take a cut-off function $\eta\in C^\infty_0(B_1)$ such that $0\leq\eta\leq 1$, $\eta|_{B_{\frac{3}{4}}}\equiv 1$ and $\|\nabla\eta\|_{L^\infty}\leq C$. By the standard elliptic estimates of Dirac operator, for any $1<p<2$, we have
		\begin{align*}
			\|\eta\psi_n\|_{W^{1,p}(B^+)}&\leq C(\|\slashed{D}(\eta\psi_n)\|_{L^p(B^+)}+\|\mathbf{B}(\eta\psi_n)\|_{L^p(\partial B^+)})\\
			&\leq C(\|e^{u_n}\eta\psi_n\|_{L^p(B^+)}+\||\nabla\eta||\psi_n|\|_{L^p(B^+)}+\|\mathbf{B}(\eta\varphi_n)\|_{L^p(\partial B^+)})\\
			&\leq C\|e^{u_n}\|_{L^2(B^+)}\|\eta\psi_n\|_{L^{\frac{2p}{2-p}}(B^+)}+C(\|\psi_n\|_{L^4(B^+)}+\|\mathbf{B}\varphi_n\|_{C^0(\partial^0 B^+)})\\
			&\leq C\sqrt{\epsilon_1}\|\eta\psi_n\|_{W^{1,p}(B^+)}+C(\|\psi_n\|_{L^4(B^+)}+\|\mathbf{B}\varphi_n\|_{C^0(\partial^0 B^+)}),
		\end{align*} where we used Sobolev embedding theory and Young's inequality. Now, take $p=\frac{16}{9}$ and $\epsilon_1$ small enough, we get $$\|\eta\psi_n\|_{W^{1,p}(B^+)}\leq  C(\|\psi_n\|_{L^4(B^+)}+\|\mathbf{B}\varphi_n\|_{C^0( \partial^0 B^+)}),$$ which implies $\|\psi_n\|_{L^{16}(B^+_{\frac{3}{4}})}\leq C(\|\psi_n\|_{L^4(B^+)}+\|\mathbf{B}\varphi_n\|_{C^0(\partial^0 B^+)}).$
		
		\
		
		\noindent\textbf{Step 2.} Estimate for $\|u_n^+\|_{L^\infty(B_{\frac{1}{2}})}$ and $\|\psi_n\|_{L^\infty(B_{\frac{1}{2}})}$ .
		
		\
		
		Let $u_n^1$ be the solution of
		\begin{align*}
			\begin{cases}
				-\Delta u_n^1=2e^{2u_n}-e^{u_n}|\psi_n|^2\ \ &in\ \ B^+,\\
				u^1_n=0\ \ &on \ \ \partial B^+
			\end{cases}
		\end{align*}
		Since $\|\Delta u_n^1\|_{L^1(B^+)}\leq C\sqrt{\epsilon_1},$ taking $\epsilon_1$ small, by Theorem 1 in \cite{Brezis}, we have $\int_{B^+}e^{8|u_n^1|}dx\leq C.$
		
		Let $u_n^2$ be the solution of
		\begin{align*}
			\begin{cases}
				-\Delta u_n^2=0\ \ &in\ \ B^+,\\
				u^2_n=\phi_n(x)-\phi_n(0)\ \ &on \ \ \partial B^+.
			\end{cases}
		\end{align*}
		By standard elliptic estimate, we know $\|u_n^2\|_{C^0(B^+)}\leq C$.
		
		Let $u_n^3=u_n-u_n^1-u_n^2$. Then it is easy to see that
		\begin{align*}
			\begin{cases}
				-\Delta u_n^3=0\ \ &in\ \ B^+,\\
				u^3_n=\phi_n(0)\ \ &on \ \ \partial^0 B^+,\\
				u^3_n=u_n-\phi_n(x)+\phi_n(0)\ \ &on \ \ \partial^+ B^+.
			\end{cases}
		\end{align*}
		Since $\phi_n(0)\leq C$, taking $C$ larger if necessary, we have
		$$
		u_n^3-C\leq0\quad\mbox{on}\quad\partial^0B^+.
		$$
		Noting that
		$$
		-\Delta(u_n^3-C)=0\quad\mbox{in}\quad B^+,
		$$
		by the local estimate at the boundary in
		\cite[Theorem~9.26]{GilbargTrudinger} and the standard interior estimate, we get
		\begin{align*}
			\|(u_n^3-C)^+\|_{L^\infty(B^+_{\frac34})}
			&\leq C\|(u_n^3-C)^+\|_{L^1(B^+)}\leq C\|(u_n^3)^+\|_{L^1(B^+)}\\
			&\leq C\left(
			\|u_n^+\|_{L^1(B^+)}
			+\|u_n^1\|_{L^1(B^+)}
			+\|u_n^2\|_{L^1(B^+)}
			\right)
			\leq C.
		\end{align*}
		Thus,
		$
		\|(u_n^3)^+\|_{L^\infty(B^+_{\frac34})}\leq C.
		$
		Then it is easy to see that $\|\Delta u_n^1\|_{L^2(B_{\frac{3}{4}})}\leq C$ which implies $\|u_n^1\|_{C^{0}(B^+_{\frac{3}{4}})}\leq C$. Thus, we arrived at $$\|u_n^+\|_{L^\infty(B^+_{\frac{3}{4}})}\leq C.$$
		
		Combining this with the fact $\|\psi_n\|_{L^{16}(B^+_{\frac{3}{4}})}\leq C$, by the standard elliptic estimates of Dirac operator and Sobolev embedding, we get $$\|\psi_n\|_{W^{1,4}(B^+_{\frac{5}{8}})}\leq C(\|\slashed{D}\psi_n\|_{L^{4}(B^+_{\frac{3}{4}})}+\|\psi_n\|_{L^4(B^+_{\frac{3}{4}})} +\|\mathbf{B}\psi_n\|_{L^4(\partial ^0B^+_{\frac{3}{4}})})\leq C,$$ and $\|\psi_n\|_{C^0(B^+_{\frac{5}{8}})}\leq C.$
		
		\
		
		\noindent\textbf{Step 3.} Conclusion of the lemma.
		
		\
		
		By the previous estimate, there exists a constant $C_0>0$, independent of $n$, such that $u_n^3\leq C_0$ in $B^+_{\frac34}$. Set $v_n:=C_0-u_n^3$. Then $v_n\geq0$ and $v_n$ is harmonic in $B^+_{\frac34}$. Moreover, since $u_n^3=\phi_n(0)$ on $\partial^0B^+_{\frac34}$, we have $v_n=C_0-\phi_n(0)$ on $\partial^0B^+_{\frac34}$.
		
		We now apply the boundary weak Harnack inequality \cite[Theorem~8.26]{GilbargTrudinger}, specifically its $p=1$ version. Fix a sufficiently small $r>0$ and let $x_0\in\partial^0B^+_{\frac23}$. Applying \cite[Theorem~8.26]{GilbargTrudinger} to $v_n$ in $B_{2r}(x_0)\cap\mathbb R^2_+$, the boundary infimum appearing in that theorem is
		$$
		m=\inf_{\partial\mathbb R^2_+\cap B_{2r}(x_0)}v_n
		=C_0-\phi_n(0).
		$$
		As in Theorem~8.26, extend $v_n$ to $B_{2r}(x_0)$ by setting $\widetilde v_n=m$ in $B_{2r}(x_0)\setminus\mathbb R^2_+$. Since we use the $p=1$ case, the boundary weak Harnack inequality gives
		$$
		\fint_{B_{2r}(x_0)}\widetilde v_n
		\leq C\inf_{B_r(x_0)\cap\mathbb R^2_+}v_n.
		$$
		On the other hand, $\widetilde v_n=C_0-\phi_n(0)$ on $B_{2r}(x_0)\setminus\mathbb R^2_+$, and this set occupies a fixed positive proportion of $B_{2r}(x_0)$. Hence
		$$
		\fint_{B_{2r}(x_0)}\widetilde v_n
		\geq c\bigl(C_0-\phi_n(0)\bigr).
		$$
		Combining the two estimates, we obtain
		$$
		\inf_{B_r(x_0)\cap\mathbb R^2_+}v_n
		\geq c\bigl(C_0-\phi_n(0)\bigr),
		$$
		where $c>0$ is independent of $n$ and $x_0$.
		
		Covering the part of $B^+_{\frac12}$ near the flat boundary by finitely many such half-balls, and then using the standard interior Harnack inequality for the positive harmonic function $v_n$ along a finite chain of interior balls, we obtain
		$$
		\inf_{B^+_{\frac12}}v_n
		\geq c\bigl(C_0-\phi_n(0)\bigr).
		$$
		Since $v_n=C_0-u_n^3$, it follows that
		$$
		\sup_{B^+_{\frac12}}u_n^3
		\leq C+c\,\phi_n(0).
		$$
		Therefore, if $\phi_n(0)\to-\infty$, then $u_n^3\to-\infty$ uniformly in $B^+_{\frac12}$. Since $u_n^1$ and $u_n^2$ are uniformly bounded, we conclude that $u_n\to-\infty$ uniformly in $B^+_{\frac12}$.
	\end{proof}
	
	\
	
With the help of small energy regularity lemma and the interior blow-up theory for the super-Liouville equation developed in \cite{Jost-Wang-Zhou-Zhu-1}, we now show that there is no interior blow-up point.
	\begin{lem}\label{lem:compactness-away-blowup}
		Let $(u_n,\psi_n)$ be a sequence of smooth solutions of
		\eqref{equat:01} satisfying
		$$
		E(u_n,\psi_n;\Omega)\leq C.
		$$
		Define
		\begin{align*}
			\Sigma_1&:=\{x\in \overline{\Omega}\ |\ \mbox{there is a sequence of points } x_n\to x\mbox{ such that } u_n(x_n)\to+\infty\},\\
			\Sigma_2&:=\{x\in \overline{\Omega}\ |\ \mbox{there is a sequence of points } x_n\to x\mbox{ such that } |\psi_n(x_n)|\to+\infty\}.
		\end{align*}
		Then we have
		\begin{itemize}
			\item[(1)] $\Sigma=\Sigma_1\cup\Sigma_2$ is a finite point set and
			$\Sigma\cap\Omega=\emptyset$.
			
			\
			
			\item[(2)] $u_n$ and $\psi_n$ are bounded in
			$L^\infty_{loc}(\overline{\Omega}\setminus \Sigma)$. Passing to a
			subsequence, we have
			$(u_n,\psi_n)\to (u,\psi)$ in
			$C^2_{loc}(\overline{\Omega}\setminus \Sigma)$ as $n\to\infty$, where
			$(u,\psi)$ satisfy $$E(u,\psi;\Omega)+\|\nabla\psi\|_{L^{\frac{4}{3}}(\Omega)}<\infty$$ and
			\begin{equation*}
				\begin{cases}
					-\Delta u= 2e^{2u} - e^{u} |\psi|^2 \ \ &in\ \  \Omega,\\
					\slashed{D}\psi= -e^{u}\psi \ \ &in\ \ \ \Omega,\\
					u=u_0  \ \ &on\ \  \partial  \Omega\setminus \Sigma,\\
					\mathbf{B}\psi=0 \ \ &on\ \  \partial  \Omega\setminus \Sigma.
				\end{cases}
			\end{equation*}
			
		\end{itemize}
	\end{lem}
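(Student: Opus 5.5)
We argue via energy concentration, combining the small energy regularity lemmas with the interior blow-up theory of \cite{Jost-Wang-Zhou-Zhu-1}. Fix $\epsilon_2=\min\{\epsilon_0,\epsilon_1\}$ and define the concentration set
$$
S:=\Big\{x\in\overline\Omega:\ \liminf_{n\to\infty}\int_{B_r(x)\cap\Omega}e^{2u_n}\,dx\geq\epsilon_2\ \text{ for all } r>0\Big\}.
$$
Since $E(u_n,\psi_n;\Omega)\le C$, passing to a subsequence gives $\#S\le C/\epsilon_2$, by the standard covering argument.

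\textbf{Step 1: $\Sigma\subset S$.} Let $x\notin S$. For some small $r$ we then have $\int_{B_r(x)\cap\Omega}e^{2u_n}<\epsilon_2$ along the subsequence.
\begin{itemize}
\item If $x\in\Omega$, rescale $B_r(x)$ to $B_1$. Both the $L^2$ norm of $e^{u}$ and the $L^4$ norm of $\psi$ are invariant under the conformal rescaling $u\mapsto u(x+r\cdot)+\log r$, $\psi\mapsto r^{1/2}\psi(x+r\cdot)$. Lemma \ref{lem:interior-small-energy-regul} therefore bounds $u_n^+$ and $\psi_n$ near $x$.
\item If $x\in\partial\Omega$, straighten the boundary by a conformal chart, which preserves the equation and the chiral condition up to smooth factors. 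In the flattened picture the Dirichlet datum $\phi_n$ is $u_0$ plus a smooth conformal factor, so $\operatorname{osc}\phi_n\le C$ and $\sup_n\phi_n(0)<\infty$, and $\mathbf{B}\varphi_n=0$. Case (1) of Lemma \ref{lem:small-energy-regul} then gives local bounds.
\end{itemize}
Hence $\Sigma\subset S$, and $\Sigma$ is finite.

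\textbf{Step 2: convergence away from $\Sigma$.} On compact subsets of $\overline\Omega\setminus\Sigma$ we now have $u_n^+$ and $\psi_n$ bounded. We also need a lower bound on $u_n$. The right-hand side of the $u$-equation is bounded, and $u_n=u_0$ on the boundary. Boundary Harnack and elliptic estimates, applied to $u_n$ minus the solution with zero data, then show that either $u_n$ is locally bounded or $u_n\to-\infty$ locally uniformly. The second alternative is excluded near $\partial\Omega\setminus\Sigma$, because $u_n=u_0$ there. Since $\overline\Omega\setminus\Sigma$ is connected (we remove finitely many points), a Harnack chain excludes it everywhere. Elliptic $W^{2,p}$ estimates for $u_n$ and $W^{1,p}$ estimates for the Dirac operator with chiral boundary condition, followed by bootstrapping, give $C^2_{loc}(\overline\Omega\setminus\Sigma)$ bounds. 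A diagonal subsequence then converges to a solution $(u,\psi)$ of the limit boundary problem. Fatou's lemma gives $E(u,\psi;\Omega)<\infty$. The bound $\|\nabla\psi\|_{L^{4/3}}<\infty$ follows from $\slashed D\psi=-e^u\psi\in L^{4/3}$ (Hölder with $e^u\in L^2$, $\psi\in L^4$) together with removability of isolated singularities for finite-energy spinors, as in \cite{Jost-Wang-Zhou}.

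\textbf{Step 3: $\Sigma\cap\Omega=\emptyset$.} This is the main obstacle. Suppose $p\in\Sigma\cap\Omega$ and take a small ball $B_\delta(p)$ containing no other point of $\Sigma$. On $\partial B_\delta(p)$ the functions $u_n$ are uniformly bounded by Step 2. The interior blow-up theory of \cite{Jost-Wang-Zhou-Zhu-1} (Brezis–Merle alternative plus energy identity) gives $u_n\to-\infty$ uniformly on compact subsets of $B_\delta(p)\setminus\{p\}$, or else $u_n$ bounded away from $p$ with a Dirac mass: $2e^{2u_n}-e^{u_n}|\psi_n|^2\rightharpoonup \beta\delta_p+\dots$ with $\beta\ge 4\pi$. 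The first alternative contradicts the boundedness of $u_n$ on $\partial B_\delta(p)$. In the second, $u_n-\bar u_n\approx -\frac{\beta}{2\pi}\log|x-p|$ near $p$, so $e^{2u}\sim|x-p|^{-\beta/\pi}$ with $\beta/\pi\ge4>2$, contradicting $\int e^{2u}<\infty$ from Step 2. The delicate part is justifying that the limiting measure near $p$ is exactly a Dirac mass with no residual spinor contribution. Here I would cite the interior energy identity and the asymptotic $u\approx-\frac{\beta}{2\pi}\log|x-p|$ from \cite{Jost-Wang-Zhou-Zhu-1} rather than reproving them. With $\Sigma\cap\Omega=\emptyset$, all of part (1) is established.
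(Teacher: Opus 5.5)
Your Step 1 is sound and is essentially the paper's concentration-set argument. Step 2, however, has a genuine gap, and Step 3 inherits it. You exclude the alternative ``$u_n\to-\infty$ locally uniformly'' near $\partial\Omega\setminus\Sigma$ on the grounds that $u_n=u_0$ there. But fixed Dirichlet data give no local lower bound in the interior. In a flattened half-disk the harmonic functions $h_n(s,t)=-nt$ are bounded above, vanish on $\{t=0\}$, and tend to $-\infty$ at every point with $t>0$. Adding such an $h_n$ is compatible with all the local information your argument uses: $u_n^+\le C$, a bounded right-hand side, and $u_n=u_0$ on the flat part. Boundary Harnack only says that a nonnegative harmonic function vanishing on the flat part is comparable to $c\,t$, with no control on $c$. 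So the Harnack chain never gets started, and you have no lower bound for $u_n$ on compacta of $\overline\Omega\setminus\Sigma$. Consequently neither the $C^2_{loc}$ convergence nor the boundedness of $u_n$ on $\partial B_\delta(p)$ used in Step 3 is justified.

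The missing idea is a global integral bound, which is what the paper uses. Let $w$ be harmonic in $\Omega$ with $w=u_0$ on $\partial\Omega$. Then $u_n-w=0$ on $\partial\Omega$ and $\|\Delta(u_n-w)\|_{L^1(\Omega)}\le 2\|e^{u_n}\|_{L^2}^2+\|e^{u_n}\|_{L^2}\|\psi_n\|_{L^4}^2\le C$. The $L^1$-data Dirichlet estimate then gives $\|u_n-w\|_{W^{1,q}(\Omega)}\le C$ for $1<q<2$, hence $\|u_n\|_{L^4(\Omega)}\le C$. This bound does three things:
\begin{enumerate}
\item It excludes $u_n\to-\infty$ on any open set.
\item Together with $u_n^+\le C$ and the bounded right-hand side, it gives local $W^{2,p}$ bounds and, after bootstrapping with the Dirac equation, $C^3$ bounds on compacta of $\overline\Omega\setminus\Sigma$ directly, with no Harnack chain.
\item It finishes Step 3 immediately: at an interior blow-up point $p$, the interior Brezis--Merle-type alternative for super-Liouville forces $u_n\to-\infty$ uniformly on compacta of $B_\delta(p)\setminus\{p\}$, which contradicts the $L^4$ bound.
\end{enumerate}
Your second alternative in Step 3 (a Dirac mass with $u$ bounded nearby), and its ``delicate'' justification, are therefore unnecessary; that case is already excluded inside the cited theorem.

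For $\|\nabla\psi\|_{L^{4/3}(\Omega)}<\infty$, it is cleaner to use the uniform estimate $\|\psi_n\|_{W^{1,4/3}(\Omega)}\le C$ for the Dirac operator with chiral boundary condition, noting $\|e^{u_n}\psi_n\|_{L^{4/3}}\le\|e^{u_n}\|_{L^2}\|\psi_n\|_{L^4}$, and then pass to the limit. A removable-singularity theorem would need a boundary version at points of $\Sigma\cap\partial\Omega$, which you have not supplied.
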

	
	\begin{proof}
We divide the proof into two parts.

\

\noindent \textbf{Step 1:} The first conclusion of the lemma.

\

			Define
	\[
	S_1
	:=
	\bigcap_{r>0}
	\left\{
	x\in\overline{\Omega}
	\ \middle|\
	\liminf_{n\to\infty}
	\int_{B_r(x)\cap\Omega}e^{2u_n}\,dx
	\geq\varepsilon_*
	\right\},
	\]
where $\varepsilon_*=\min\{\epsilon_0,\epsilon_1\}>0$ and $\epsilon_0,\epsilon_1$ are the constants in Lemma~\ref{lem:interior-small-energy-regul} and
	Lemma~\ref{lem:small-energy-regul}.

	We first claim that
	$
	S_1=\Sigma_1.
	$
	Indeed, for any $x_0\in\Sigma_1$, if $x_0\notin S_1$, then there exist
	$r>0$ and a subsequence, still denoted by $n$, such that
	$$
	\int_{B_r(x_0)\cap\Omega}e^{2u_n}\,dx<\varepsilon_*.
	$$
	By Lemma~\ref{lem:interior-small-energy-regul} and
	Lemma~\ref{lem:small-energy-regul}, after
	flattening the boundary when necessary, $u_n^+$ is uniformly
	bounded in $B_{\frac r2}(x_0)\cap\Omega$, which contradicts to the fact that
	$x_0\in\Sigma_1$.
	
	Conversely, for any $x_0\in S_1$, if $x_0\notin\Sigma_1$, then there exist $r_0>0$
	and $C>0$ such that
	$$
	u_n\leq C
	\quad\mbox{in}\quad
	B_{r_0}(x_0)\cap\Omega.
	$$
	Otherwise, after passing to a subsequence, one could find
	$x_n\to x_0$ such that $u_n(x_n)\to+\infty$. Taking
	$0<r<r_0$ sufficiently small, we have
	$$
	\int_{B_r(x_0)\cap\Omega}e^{2u_n}\,dx
	\leq
	e^{2C}|B_r(x_0)\cap\Omega|
	<\varepsilon_*,
	$$
	and hence $x_0\notin S_1$. Therefore, $S_1=\Sigma_1$.
	
	The energy bound implies that $S_1$ is finite. Indeed, for any
	distinct points $x_1,\ldots,x_N\in S_1$, choosing mutually
	disjoint small balls centered at these points, we obtain
	$$
	C
	\geq
	\liminf_{n\to\infty}\int_\Omega e^{2u_n}\,dx
	\geq
	N\varepsilon_*.
	$$
	Thus $\Sigma_1=S_1$ is finite.
	
	For any $x_0\in \Sigma_2\setminus \Sigma_1$, by the standard elliptic estimates for the Dirac operator and the bound of $\|\psi_n\|_{L^4(\Omega)}$, we can show that $ |\psi_n|$ is locally and uniformly bounded near $x_0$ which is a contradiction. Thus, $ \Sigma_2\subset\Sigma_1, $ and hence $ \Sigma=\Sigma_1 $ is finite.

		Let $w$ be the solution of
		\begin{align*}
			\begin{cases}
				-\Delta w=0\ \ &in\ \Omega,\\
				w=u_0\ \ &on\ \partial\Omega.
			\end{cases}
		\end{align*}
		Since $u_n-w=0$ on $\partial\Omega$ and $\|\Delta u_n\|_{L^1(\Omega)}\leq C$, by the standard elliptic
		estimate, for any $1<q<2$, we have
		$$
		\|u_n-w\|_{W^{1,q}(\Omega)}\leq C.
		$$
		By the Sobolev embedding
		$
		W^{1,\frac43}(\Omega)\hookrightarrow L^4(\Omega),
		$
		we get
		$
		\|u_n\|_{L^4(\Omega)}\leq C.
		$
		
		Now, we show that
		$
		\Sigma\cap\Omega=\emptyset.
		$
		Suppose that there exists
		$
		p\in\Sigma\cap\Omega.
		$
		Since $\Sigma=\Sigma_1$ is finite, we can choose $R>0$ sufficiently
		small such that
		$$
		\overline{B_{2R}(p)}\subset\Omega,
		\qquad
		B_{2R}(p)\cap\Sigma=\{p\}.
		$$
		Since $B_{2R}(p)$ is contained completely in the interior of $\Omega$,
		by the standard interior blow-up analysis for the super-Liouville
		equation (see \cite[Theorem~1.3]{Jost-Wang-Zhou-Zhu-1}), passing to a subsequence,
		we have
		$
		u_n\to-\infty
		$
		uniformly on compact subsets of
		$
		B_{2R}(p)\setminus\{p\}.
		$
		This contradicts to
		$
		\|u_n\|_{L^4(\Omega)}\leq C.
		$
		Therefore,
		$
		\Sigma\cap\Omega=\emptyset.
		$
		
\

\noindent \textbf{Step 2:} The second conclusion of the lemma.

\

Firstly, by the standard elliptic estimates for Dirac operator, we have
\begin{align*}
\|\psi_n\|_{W^{1,\frac{4}{3}}(\Omega)}\leq C(\|\slashed D\psi_n\|_{L^{\frac{4}{3}}(\Omega)}+\|\mathbf{B}\psi_n\|_{L^{\frac{4}{3}}(\partial\Omega)})\leq C.
\end{align*}
		For any		$
		x_0\in \overline{\Omega}\setminus\Sigma.
		$
		Then there exists $r_0>0$ such that $$\|u_n^+\|_{L^\infty(B_{2r_0}(x_0)\cap\Omega)}+\|\psi_n\|_{L^\infty(B_{2r_0}(x_0)\cap\Omega)}\leq C.$$		
		Combining this with
		$
		\|u_n\|_{L^4(\Omega)}+\|\psi_n\|_{L^4(\Omega)}\leq C,
		$
		and using the standard interior and boundary elliptic estimates, we obtain
		$$
		\|u_n\|_{C^3(B_{r_0}(x_0)\cap\Omega)}
		+
		\|\psi_n\|_{C^3(B_{r_0}(x_0)\cap\Omega)}
		\leq C.
		$$
		
		Thus, passing to a subsequence, we get
		$
		(u_n,\psi_n)\to(u,\psi)
		$
		in
		$
		C^2_{\mathrm{loc}}
		\left(
		\overline{\Omega}\setminus\Sigma
		\right),
		$
		where $(u,\psi)$ satisfies
		\begin{equation*}
			\begin{cases}
				-\Delta u=2e^{2u}-e^u|\psi|^2
				&in\ \Omega,\\
				\slashed{D}\psi=-e^u\psi
				&in\ \Omega,\\
				u=u_0
				&on\ \partial\Omega\setminus\Sigma,\\
				\mathbf{B}\psi=0
				&on\ \partial\Omega\setminus\Sigma.
			\end{cases}
		\end{equation*}
		Finally, by Fatou's lemma, we have
		$
		E(u,\psi;\Omega)+\|\nabla\psi\|_{L^{\frac{4}{3}}(\Omega)}<\infty.
		$
		This proves the lemma.
	\end{proof}
	
	\
	
	Next, we will show that the blow-up set $\Sigma=\emptyset$. Without loss of generality, we assume $0\in\Sigma\subset\partial \Omega$  and $B^+_1(0)\subset \Omega$ with $\partial^0B^+_1(0)\subset \partial \Omega$. Otherwise, for $p\in\Sigma$, using the isothermal coordinates at $p$, i.e. there exists a conformal map $f:\overline{B^+_1(0)}\to \overline{B_{r_0}(p)}\cap\overline{\Omega}$ such that $$f^*((dx^1)^2+(dx^2)^2)=e^{2\xi(x)}((dx^1)^2+(dx^2)^2), f(\partial^0B^+)\subset\partial \Omega,$$ we just need to consider its conformal transformation that $$\tilde{u}_n(x)=u_n\circ f+\xi(x),\ \ \tilde{\psi}_n(x)=e^{\frac{1}{2}\xi(x)}\psi_n\circ f$$ which satisfies
	\begin{equation*}
		\begin{cases}
			-\Delta \tilde{u}_n= 2e^{2\tilde{u}_n} - e^{\tilde{u}_n} |\tilde{\psi}_n|^2 \ \ &in\ \  B^+_1(0),\\
			\slashed{D}\tilde{\psi}_n= -e^{\tilde{u}_n}\tilde{\psi}_n \ \ &in\ \ \ B^+_1(0),\\
			\tilde{u}_n=u_0\circ f+\xi  \ \ &on\ \  \partial ^0B^+_1(0),\\
			\mathbf{B}\tilde{\psi}_n=0  \ \ &on\ \  \partial ^0B^+_1(0).
		\end{cases}
	\end{equation*}

	\
	
Now we derive the following Pohozaev type identity near a Dirichlet-chiral boundary which will be used in our later proof.
	\begin{lem}\label{lem:boundary-pohozaev-identity}
		Let $(u_n,\psi_n)$ be a sequence of solutions of
		\begin{equation}\label{eq:local-super-liouville-dirichlet}
			\begin{cases}
				-\Delta u_n
				=
				2e^{2u_n}-e^{u_n}|\psi_n|^2
				&\mbox{in } B_1^+(0),\\
				\slashed D\psi_n
				=
				-e^{u_n}\psi_n
				&\mbox{in } B_1^+(0),\\
				u_n=u_0
				&\mbox{on } \partial^0B_1^+(0),\\
				\mathbf B\psi_n=0
				&\mbox{on } \partial^0B_1^+(0).
			\end{cases}
		\end{equation}
		Denote $x=(s,t)$ and $\bar x=(s,-t)$. Then, for every $0<r<1$,
		the following identity holds
		\begin{align}\label{eq:halfdisk-pohozaev-nonzero-dirichlet}
			&\int_{B_r^+}
			\left(
			2e^{2 u_n}
			-
			e^{ u_n}|\psi_n|^2
			\right)\,dx
			\notag\\
			&\quad =
			r\int_{\partial^+B_r^+}
			\left(
			\left|\frac{\partial u_n}{\partial\nu}\right|^2
			-\frac12|\nabla  u_n|^2
			+e^{2  u_n}
			\right)\,d\sigma
			-\frac14
			\int_{\partial^+B_r^+}
			\left\langle
			\frac{\partial \psi_n}{\partial\nu},
			(x+\bar x)\cdot \psi_n
			\right\rangle\,d\sigma
			\notag\\
			&\qquad
			-\frac14
			\int_{\partial^+B_r^+}
			\left\langle
			(x+\bar x)\cdot \psi_n,
			\frac{\partial \psi_n}{\partial\nu}
			\right\rangle\,d\sigma
			+\int_{\partial^0B_r^+}
			s\,\frac{\partial u_0}{\partial s}(s,0)\,
			\frac{\partial  u_n}{\partial\nu}(s,0)\,ds .
		\end{align}
		Here $\nu$ denotes the outward unit normal vector field. On $\partial^+B_r^+$,
		$\nu$ is the radial outward normal, while on $\partial^0B_r^+$,
		$\nu=-\partial_t$.
	\end{lem}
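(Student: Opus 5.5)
We test the $u$-equation against $x\cdot\nabla u_n$ and the spinor equation against the reflected vector field $\tfrac12(x+\bar x)\cdot\psi_n$. Then we integrate over $B_r^+$, integrate by parts, and use the boundary conditions on $\partial^0B_r^+$ to evaluate the flat-boundary terms.

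\textbf{Step 1 (the scalar part).} Multiply $-\Delta u_n=2e^{2u_n}-e^{u_n}|\psi_n|^2$ by $x\cdot\nabla u_n$ and integrate over $B_r^+$. The standard Rellich computation in two dimensions gives
\[
-\int_{B_r^+}\Delta u_n\,(x\cdot\nabla u_n)=-\int_{\partial B_r^+}\Big(\frac{\partial u_n}{\partial\nu}(x\cdot\nabla u_n)-\frac12|\nabla u_n|^2\,x\cdot\nu\Big)d\sigma .
\]
On $\partial^+B_r^+$ we have $x\cdot\nu=r$ and $x\cdot\nabla u_n=r\,\partial_\nu u_n$. On $\partial^0B_r^+$ we have $x\cdot\nu=0$ and $x\cdot\nabla u_n=s\,\partial_s u_n=s\,\partial_s u_0$, because $u_n=u_0$ there. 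This is exactly the source of the last boundary term in \eqref{eq:halfdisk-pohozaev-nonzero-dirichlet}. For the exponential term we write $2e^{2u_n}x\cdot\nabla u_n=x\cdot\nabla e^{2u_n}$ and integrate by parts, which yields $-2\int e^{2u_n}+r\int_{\partial^+}e^{2u_n}$; the flat part again vanishes since $x\cdot\nu=0$. The remaining term $-\int e^{u_n}|\psi_n|^2\,x\cdot\nabla u_n$ will not be integrated directly. Instead we combine it with the spinor identity below, since $e^{u_n}|\psi_n|^2 x\cdot\nabla u_n=x\cdot\nabla(e^{u_n})|\psi_n|^2$.

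\textbf{Step 2 (the spinor part).} Let $X=\frac12(x+\bar x)=(s,0)$, the vector field tangent to the flat boundary. We pair the Dirac equation with $X\cdot\nabla\psi_n$ and take real parts, or equivalently compute $\operatorname{div}$ of the vector field $V^\alpha=\langle e_\alpha\cdot\psi_n, X\cdot\psi_n\rangle$ type quantities, as in the interior Pohozaev identity of Jost--Wang--Zhou. Using $\slashed D\psi_n=-e^{u_n}\psi_n$, one obtains
\[
\int_{B_r^+} X\cdot\nabla(e^{u_n})|\psi_n|^2+\tfrac12\operatorname{div}X\, e^{u_n}|\psi_n|^2 \;=\;\text{boundary terms}.
\]
Because $X$ only has a tangential component, one must track $\operatorname{div}X=1$ rather than $2$. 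Combining this with Step 1 is what makes the coefficient in front of $\int e^{u_n}|\psi_n|^2$ equal to $1$ on the left-hand side, matching $\int(2e^{2u_n}-e^{u_n}|\psi_n|^2)$. I will check this bookkeeping carefully, possibly adding the trivial identity $\int e^{u_n}|\psi_n|^2=-\int\langle\slashed D\psi_n,\psi_n\rangle$, which integrates to a boundary term vanishing under the chiral condition. The boundary terms on $\partial^+B_r^+$ become the two terms $-\frac14\int\langle\partial_\nu\psi_n,(x+\bar x)\cdot\psi_n\rangle$ and its conjugate-order counterpart.

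\textbf{Step 3 (vanishing on the flat boundary: the main obstacle).} The key point is that every spinor boundary term on $\partial^0B_r^+$ vanishes. Take a term of the form $\langle\nu\cdot\psi_n,X\cdot\nabla\psi_n\rangle$ or $\langle\nu\cdot\psi_n,\psi_n\rangle\,X\cdot\nu$. The latter is zero since $X\cdot\nu=0$. For the former, $X\cdot\nabla\psi_n=s\,\partial_s\psi_n$ is a tangential derivative, so $\mathbf B(\partial_s\psi_n)=0$ as well, given that $e_1=\partial_s$ is constant. The chiral condition means $\psi_n,\partial_s\psi_n$ lie in the $+1$-eigenspace of $ie_1\cdot$. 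The operator $\nu\cdot=-e_2\cdot$ anticommutes with $ie_1\cdot$, so it maps this eigenspace to the $-1$-eigenspace, which is orthogonal to the $+1$-eigenspace for the real inner product. Hence $\mathrm{Re}\langle\nu\cdot\psi_n,\varphi\rangle=0$ for all $\varphi$ satisfying $\mathbf B\varphi=0$. This orthogonality is precisely why one uses $x+\bar x$ instead of $x$: the normal part of $x$ would produce the non-tangential derivative $\partial_t\psi_n$, which has no boundary condition. Once this is verified, collecting the terms from Steps 1 and 2 gives \eqref{eq:halfdisk-pohozaev-nonzero-dirichlet}.
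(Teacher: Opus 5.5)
Your Step 2 fails as written. The field $X=(s,0)$ is not conformal Killing, so the Dirac computation does not close. Using $\slashed D\psi_n=-e^{u_n}\psi_n$ (real inner product) one finds
\[
\partial_\alpha\langle e_\alpha\cdot\psi_n,\nabla_X\psi_n\rangle=|\psi_n|^2\,X\cdot\nabla e^{u_n}+\langle e_1\cdot\psi_n,\partial_1\psi_n\rangle .
\]
The equation controls only the trace $\langle e_1\cdot\psi_n,\partial_1\psi_n\rangle+\langle e_2\cdot\psi_n,\partial_2\psi_n\rangle=e^{u_n}|\psi_n|^2$. The trace-free part $\frac12\bigl(\langle e_1\cdot\psi_n,\partial_1\psi_n\rangle-\langle e_2\cdot\psi_n,\partial_2\psi_n\rangle\bigr)$ survives as an uncontrolled bulk term, so your identity with $\frac12\operatorname{div}X$ is false. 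Independently, even that identity would only cancel $\int|\psi_n|^2X\cdot\nabla e^{u_n}$, whereas Step 1 leaves $\int|\psi_n|^2x\cdot\nabla e^{u_n}$; the piece $\int|\psi_n|^2\,t\,\partial_t e^{u_n}$ is never handled. The proposed ``trivial identity'' does not help either: $\langle\slashed D\psi_n,\psi_n\rangle$ is not a divergence (symmetry of $\slashed D$ gives no more than $\langle\slashed D\psi_n,\psi_n\rangle=\langle\psi_n,\slashed D\psi_n\rangle$), so $\int e^{u_n}|\psi_n|^2$ is not a boundary term. Finally, your reason for abandoning $x$ is mistaken. On $\partial^0B_r^+$ one has $t=0$, so $x$ is already tangential there and $\nabla_x\psi_n=s\,\partial_s\psi_n$; no normal derivative appears.

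The repair is to use $x$ for the spinor as well. One checks $\operatorname{div}\bigl(\langle e_\alpha\cdot\psi_n,\nabla_x\psi_n\rangle e_\alpha\bigr)=e^{u_n}|\psi_n|^2+|\psi_n|^2\,x\cdot\nabla e^{u_n}$. The flat flux $\langle -e_2\cdot\psi_n,s\,\partial_s\psi_n\rangle$ vanishes by exactly your Step 3 orthogonality argument, which is correct, and the result combines with Step 1. This direct half-disc computation is a genuine alternative to the paper's route (reflection $\widehat\psi_n=ie_1\cdot\psi_n(\bar x)$, the full-disc identity of Jost--Wang--Zhou--Zhu, then folding), and it avoids any regularity issue across $t=0$.

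However, it yields the curved-boundary spinor term $-\int_{\partial^+B_r^+}\langle x\cdot\psi_n,\partial_\nu\psi_n\rangle\,d\sigma$. That is \eqref{eq:halfdisk-pohozaev-nonzero-dirichlet} with $x+\bar x$ replaced by $2x$, and the discrepancy $\int_{\partial^+B_r^+}t\,\langle e_2\cdot\psi_n,\partial_\nu\psi_n\rangle\,d\sigma$ is not zero in general. The paper's folding step has the matching slip. Since $\bar y\cdot(ie_1\cdot\varphi)=ie_1\cdot(y\cdot\varphi)$ and $ie_1\cdot$ is unitary, the lower semicircle contributes $\langle y\cdot\psi_n,\partial_\nu\psi_n\rangle$, not $\langle\bar y\cdot\psi_n,\partial_\nu\psi_n\rangle$.

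For a concrete check, take $e_1\cdot(a,b)=(ib,ia)$ and $e_2\cdot(a,b)=(b,-a)$. Then $\psi=(1+iz,-(1+i\bar z))$ satisfies $\slashed D\psi=0$ and $\mathbf B\psi=0$ on $t=0$. Taking real parts, $\int_{\partial^+B_r^+}\langle x\cdot\psi,\partial_\nu\psi\rangle\,d\sigma=0$, whereas $\int_{\partial^+B_r^+}\langle s\,e_1\cdot\psi,\partial_\nu\psi\rangle\,d\sigma=-\pi r^2$. So the $(x+\bar x)$ form of \eqref{eq:pohozaev-spinor-part} already fails when the potential is replaced by $0$, a case its derivation covers.

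You should aim for the $2x$ version. The paper's later use is unaffected, since either boundary term is bounded by $Cr\int_{\partial^+B_r^+}|\psi_n||\nabla\psi_n|\,d\sigma$.
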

	\begin{proof}
		Multiplying the first equation in \eqref{eq:local-super-liouville-dirichlet} by
		$x\cdot\nabla u_n$ and integrating over $B_r^+$, we get
		$$
		-\int_{B_r^+}\Delta u_n\,x\cdot\nabla u_n\,dx
		=
		\int_{B_r^+}
		\left(
		2e^{2u_n}-e^{u_n}|\psi_n|^2
		\right)
		x\cdot\nabla u_n\,dx .
		$$
		Integration by parts, we get
		\begin{align}
			-\int_{B_r^+}\Delta u_n\,x\cdot\nabla u_n\,dx	&=
			-\int_{\partial B_r^+}
			\frac{\partial u_n}{\partial\nu}
			x\cdot\nabla u_n\,d\sigma+
			\int_{B_r^+}
			\nabla u_n\cdot\nabla(x\cdot\nabla u_n)\,dx \notag\\
&=-\int_{\partial B_r^+}
			\frac{\partial u_n}{\partial\nu}
			x\cdot\nabla u_n\,d\sigma+
			\int_{B_r^+}
			|\nabla u_n|^2\,dx+	\frac{1}{2}\int_{B_r^+}x\cdot\nabla|\nabla u_n|^2\,dx \notag\\
&=-\int_{\partial B_r^+}
			\frac{\partial u_n}{\partial\nu}
			x\cdot\nabla u_n\,d\sigma+	\frac{1}{2}\int_{\partial B_r^+}|\nabla u_n|^2 x\cdot\nu\,dx \notag\\
&=			-r\int_{\partial^+B_r^+}
			\left(
			\left|\frac{\partial u_n}{\partial\nu}\right|^2
			-\frac12|\nabla u_n|^2
			\right)d\sigma-
			\int_{\partial^0B_r^+}
			s\frac{\partial u_0}{\partial s}(s,0)
			\frac{\partial u_n}{\partial\nu}(s,0)\,ds,
			\label{eq:pohozaev-scalar-left}
		\end{align}
		and
		\begin{align}
\int_{B_r^+}
			2e^{2u_n}x\cdot\nabla u_n\,dx
			&=
			r\int_{\partial^+B_r^+}e^{2u_n}\,d\sigma
			-
			2\int_{B_r^+}e^{2u_n}\,dx ,
			\label{eq:pohozaev-e2u}\\
			\int_{B_r^+}
			e^{u_n}|\psi_n|^2x\cdot\nabla u_n\,dx
			&=
			r\int_{\partial^+B_r^+}
			e^{u_n}|\psi_n|^2\,d\sigma-
			\int_{B_r^+}
			e^{u_n}x\cdot\nabla(|\psi_n|^2)\,dx
			-
			2\int_{B_r^+}
			e^{u_n}|\psi_n|^2\,dx .
			\label{eq:pohozaev-spinor-density}
		\end{align}
		Combining \eqref{eq:pohozaev-scalar-left},
		\eqref{eq:pohozaev-e2u} and
		\eqref{eq:pohozaev-spinor-density}, we get
		\begin{align}
			r\int_{\partial^+B_r^+}
			\left(
			\left|\frac{\partial u_n}{\partial\nu}\right|^2
			-\frac12|\nabla u_n|^2
			\right)d\sigma=&
			2\int_{B_r^+}\left(e^{2u_n}-e^{u_n}|\psi_n|^2\right)\,dx
			-
			r\int_{\partial^+B_r^+}\left(e^{2u_n}-e^{u_n}|\psi_n|^2\right)\,d\sigma\notag
			\\
			&-
			\int_{B_r^+}
			e^{u_n}x\cdot\nabla(|\psi_n|^2)\,dx-
			\int_{\partial^0B_r^+}
			s\frac{\partial u_0}{\partial s}(s,0)
			\frac{\partial u_n}{\partial\nu}(s,0)\,ds .
			\label{eq:pohozaev-before-spinor}
		\end{align}
		
		Next, we deal with the spinor terms. By the chirality boundary
		condition, we extend $\psi_n$ to $B_r$ by
		$$
		\widehat\psi_n(x)
		=
		\begin{cases}
			\psi_n(x),&x\in B_r^+,\\[1mm]
			ie_1\cdot\psi_n(\bar x),&x\in B_r^-.
		\end{cases}
		$$
		Setting
		$$
		\widehat A_n(x)
		=
		\begin{cases}
			e^{u_n(x)},&x\in B_r^+,\\[1mm]
			e^{u_n(\bar x)},&x\in B_r^-,
		\end{cases}
		$$
		By Lemma 3.4 in \cite{Jost-Zhou-Zhu-2014}, we have
		$$
		-\slashed D\widehat\psi_n
		=
		\widehat A_n\widehat\psi_n
		\quad\mbox{in}\quad B_r.
		$$
		By the Schr\"odinger--Lichnerowicz formula
		$
		\slashed D^2=-\Delta,
		$
		we have
		$$
		-\Delta\widehat\psi_n
		=
		-\slashed D\widehat A_n\cdot\widehat\psi_n
		+
		\widehat A_n^2\widehat\psi_n=-\nabla_{e_\alpha}\widehat A_n e_\alpha\cdot \widehat \psi_n +
		\widehat A_n^2\widehat\psi_n.
		$$
		Multiplying this equation by $x\cdot\widehat\psi_n$, taking
		the Hermitian conjugate identity, and integrating by parts, by a similar computation as in the Proposition 2.7 of \cite{Jost-Wang-Zhou-Zhu-1}, we get
\begin{align*}
			&r\int_{\partial B_r}
			\widehat A_n |\widehat\psi_n|^2\,d\sigma
			-
			\int_{B_r}
			\widehat A_n x\cdot\nabla(|\widehat\psi_n|^2)\,dx
			\notag\\
			&\quad=
			\frac12
			\int_{\partial B_r }
			\left\langle
			\frac{\partial \widehat\psi_n}{\partial\nu},
			x\cdot \widehat\psi_n
			\right\rangle\,d\sigma+
			\frac12
			\int_{\partial B_r }
			\left\langle
			x\cdot \widehat\psi_n,
			\frac{\partial\psi_n}{\partial\nu}
			\right\rangle\,d\sigma
			+
			\int_{B_r }
			\widehat A_n|\widehat\psi_n|^2\,dx .
		\end{align*}

		By the definition of the reflection,		 we obtain
		\begin{align}
			&r\int_{\partial^+B_r^+}
			e^{u_n}|\psi_n|^2\,d\sigma
			-
			\int_{B_r^+}
			e^{u_n}x\cdot\nabla(|\psi_n|^2)\,dx
			\notag\\
			&\quad=
			\frac14
			\int_{\partial^+B_r^+}
			\left\langle
			\frac{\partial\psi_n}{\partial\nu},
			(x+\bar x)\cdot\psi_n
			\right\rangle\,d\sigma+
			\frac14
			\int_{\partial^+B_r^+}
			\left\langle
			(x+\bar x)\cdot\psi_n,
			\frac{\partial\psi_n}{\partial\nu}
			\right\rangle\,d\sigma
			+
			\int_{B_r^+}
			e^{u_n}|\psi_n|^2\,dx .
			\label{eq:pohozaev-spinor-part}
		\end{align}
		Substituting \eqref{eq:pohozaev-spinor-part} into
		\eqref{eq:pohozaev-before-spinor}, we get
		\eqref{eq:halfdisk-pohozaev-nonzero-dirichlet}.
	\end{proof}
	
	\
	
In order to estimate the energies in the boundary neck domain, we need the following lemma.
	\begin{lem}
		\label{lem:boundary-annular-spinor-estimate}
		Let  $(u_n,\psi_n)$ be a sequence of smooth solutions to
		$$
		\slashed D\psi_n=-e^{u_n}\psi_n
		\quad\mbox{in } B_1^+,
		\qquad
		\mathbf B\psi_n=0
		\quad\mbox{on } \partial^0B_1^+.
		$$
Denote $
		A_{r_1,r_2}^+
		:=
		\left\{
		x\in\mathbb R^2_+:
		r_1\leq|x|\leq r_2
		\right\}
		$
		where
		$
		0<r_1<2r_1<\frac{r_2}{2}<r_2.
		$
		Then we have
		\begin{align*}
			&\left(
			\int_{A_{2r_1,\frac{r_2}{2}}^+}
			|\nabla\psi_n|^{\frac43}\,dx
			\right)^{\frac34}
			+
			\left(
			\int_{A_{2r_1,\frac{r_2}{2}}^+}
			|\psi_n|^4\,dx
			\right)^{\frac14}
			\notag\\
			&\quad\leq
			C_0
			\left(
			\int_{A_{r_1,r_2}^+}
			e^{2u_n}\,dx
			\right)^{\frac12}
			\left(
			\int_{A_{r_1,r_2}^+}
			|\psi_n|^4\,dx
			\right)^{\frac14}
			+
			C
			\left(
			\int_{A_{r_1,2r_1}^+}
			|\psi_n|^4\,dx
			\right)^{\frac14}
			+
			C
			\left(
			\int_{A_{\frac{r_2}{2},r_2}^+}
			|\psi_n|^4\,dx
			\right)^{\frac14},
		\end{align*} where  $C_0>0$ and $C>0$ are two constants independent of		$n,r_1,r_2$.
	\end{lem}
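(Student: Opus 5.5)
The plan is to reduce to the interior annular estimate (Lemma 3.1 in \cite{Jost-Zhou-Zhu-2014}) by the same chiral reflection used in the proof of Lemma~\ref{lem:boundary-pohozaev-identity}. Define $\widehat\psi_n$ on $B_1$ by $\widehat\psi_n=\psi_n$ on $B_1^+$ and $\widehat\psi_n(x)=ie_1\cdot\psi_n(\bar x)$ on $B_1^-$, and let $\widehat A_n$ be the even reflection of $e^{u_n}$. By Lemma 3.4 in \cite{Jost-Zhou-Zhu-2014}, the condition $\mathbf B\psi_n=0$ makes the reflected spinor continuous across $t=0$ and a weak solution of $-\slashed D\widehat\psi_n=\widehat A_n\widehat\psi_n$ in $B_1$. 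Since $|ie_1\cdot\varphi|=|\varphi|$ and the reflection is an isometry, all $L^4$, $L^{4/3}$ norms of $\widehat\psi_n,\nabla\widehat\psi_n$ and $L^2$ norms of $\widehat A_n$ over the full annuli $A_{r_1,r_2}=\{r_1\le|x|\le r_2\}$ equal $2^{1/p}$ times the corresponding half-annulus quantities. It therefore suffices to prove the estimate on full annuli for the equation $\slashed D\widehat\psi_n=-\widehat A_n\widehat\psi_n$.

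For the full annulus estimate I would argue directly, so that the constant $C_0$ is clearly independent of $r_1,r_2$. Take a cut-off $\eta$ with $\eta\equiv1$ on $A_{2r_1,r_2/2}$, support in $A_{r_1,r_2}$, $|\nabla\eta|\le C/r_1$ on $A_{r_1,2r_1}$ and $|\nabla\eta|\le C/r_2$ on $A_{r_2/2,r_2}$. Then $\eta\widehat\psi_n$ has compact support in $\R^2$, and the global $L^{4/3}$ estimate for $\slashed D$ on $\R^2$ (the Calder\'on--Zygmund bound $\|\nabla\xi\|_{L^{4/3}}\le C\|\slashed D\xi\|_{L^{4/3}}$, which is scale invariant), followed by Sobolev $\|\xi\|_{L^4}\le C\|\nabla\xi\|_{L^{4/3}}$, gives
$$\|\nabla(\eta\widehat\psi_n)\|_{L^{4/3}}+\|\eta\widehat\psi_n\|_{L^4}\le C\|\eta\widehat A_n\widehat\psi_n\|_{L^{4/3}}+C\|\nabla\eta\,\widehat\psi_n\|_{L^{4/3}}.$$
Apply H\"older to the first term on the right: $\|\widehat A_n\widehat\psi_n\|_{L^{4/3}(A_{r_1,r_2})}\le\|\widehat A_n\|_{L^2}\|\widehat\psi_n\|_{L^4}$. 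This is the $C_0$ term; no smallness is needed, because the right-hand side keeps the full annulus norm of $\psi_n$.

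Apply H\"older to the cut-off term on each transition annulus: $\|\nabla\eta\,\widehat\psi_n\|_{L^{4/3}(A_{r_1,2r_1})}\le (C/r_1)|A_{r_1,2r_1}|^{1/2}\|\widehat\psi_n\|_{L^4(A_{r_1,2r_1})}\le C\|\widehat\psi_n\|_{L^4(A_{r_1,2r_1})}$. The outer annulus is handled the same way. This scaling cancellation is exactly why $C$ is independent of $r_1,r_2$, and checking it is the one point needing care. Since $\eta\equiv1$ on $A_{2r_1,r_2/2}$, restricting the left side to that region and halving back to $\mathbb R^2_+$ gives the stated inequality.

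I expect the main obstacle to be justifying the reflection step: that $\widehat\psi_n$ is a genuine $W^{1,4/3}$ weak solution across $\{t=0\}$ with no jump term. This relies precisely on the chirality condition $\psi_n=ie_1\cdot\psi_n$ on the boundary, which is taken from Lemma 3.4 of \cite{Jost-Zhou-Zhu-2014}. A second point to check is that $\widehat A_n$ is only Lipschitz across the line, but since only its $L^2$ norm enters, this causes no problem.
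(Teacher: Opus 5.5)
Your proposal is correct and follows the paper's route: extend $\psi_n$ by the chiral reflection $ie_1\cdot\psi_n(\bar x)$ and $e^{u_n}$ evenly, apply an interior annular estimate on the full annuli, and pass back to half-annuli using that the reflection is an isometry. The only difference is that the paper simply cites Lemma~3.1 of Jost--Wang--Zhou--Zhu for the full-annulus estimate, whereas you prove it directly (cut-off, scale-invariant Calder\'on--Zygmund and Sobolev bounds, H\"older), which also makes it explicit that the constants do not depend on $r_1,r_2$.
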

	\begin{proof}
	For $0<a<b$, set
	$$
	A_{a,b}
	:=
	\left\{
	x\in\mathbb R^2:
	a\leq |x|\leq b
	\right\}.
	$$
	For $x=(x_1,x_2)$, denote $\bar x=(x_1,-x_2)$. By the zero chirality boundary condition, we
	extend $\psi_n$ and $e^{u_n}$ to $B_1$ by setting
	$$
	\widehat\psi_n(x)
	=
	\begin{cases}
		\psi_n(x),
		&x\in B_1^+,\\[1mm]
		ie_1\cdot\psi_n(\bar x),
		&x\in B_1^-,
	\end{cases}
	\qquad
	\widehat A_n(x)
	=
	\begin{cases}
		e^{u_n(x)},
		&x\in B_1^+,\\[1mm]
		e^{u_n(\bar x)},
		&x\in B_1^-.
	\end{cases}
	$$
	Then
	$$
	\slashed D\widehat\psi_n
	=
	-\widehat A_n\widehat\psi_n
	\quad\mbox{in}\quad B_1.
	$$

	By	Lemma 3.1 in \cite{Jost-Wang-Zhou-Zhu-1}, we get
	\begin{align*}
		&\left(
		\int_{A_{2r_1,\frac{r_2}{2}}}
		|\nabla\widehat\psi_n|^{\frac43}\,dx
		\right)^{\frac34}
		+
		\left(
		\int_{A_{2r_1,\frac{r_2}{2}}}
		|\widehat\psi_n|^4\,dx
		\right)^{\frac14}
		\\
		&\quad\leq
		C_0
		\left(
		\int_{A_{r_1,r_2}}
		\widehat A_n^2\,dx
		\right)^{\frac12}
		\left(
		\int_{A_{r_1,r_2}}
		|\widehat\psi_n|^4\,dx
		\right)^{\frac14}+
		C
		\left(
		\int_{A_{r_1,2r_1}}
		|\widehat\psi_n|^4\,dx
		\right)^{\frac14}
		+
		C
		\left(
		\int_{A_{\frac{r_2}{2},r_2}}
		|\widehat\psi_n|^4\,dx
		\right)^{\frac14},
	\end{align*} where  $C_0>0$ and $C>0$ are two constants independent of		$n,r_1,r_2$.
	
	By the definition of the reflection, we obtain the conclusion of the lemma.
\end{proof}

	\
	
At the end of this section, we use potential analysis to get following two lemmas.
	\begin{lem}\label{lem:boundary-decay-estimates}
		Let $(u_n,\psi_n)$ be a sequence of solutions of \eqref{equat:01}. Suppose that, passing to a subsequence,
		$$(u_n,\psi_n)\to(u,\psi)\quad\mbox{in}\quad C^2_{\mathrm{loc}}(\overline{\Omega}\setminus\Sigma),$$
		and that $0\in\Sigma$ is the only blow-up point in $B_1^+(0)$. Then we have
		\begin{itemize}
			\item[(1)] For any $x\in B^+_{\frac{1}{2}}(0)\setminus\{0\}$, there holds
			$$
			u(x)+\ln|x|\leq C,\qquad
			\sqrt{|x|}|\psi(x)|\leq C,\qquad
			|x||\nabla u(x)|\leq C.
			$$
			
			\item[(2)] We have
			$$
			\lim_{r\to 0}\lim_{n\to\infty}
			\int_{\partial^0 B_r^+}|s||\nabla u_n|\,ds=0.
			$$
		\end{itemize}
	\end{lem}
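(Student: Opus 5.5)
Part (1) follows by a scaling argument built on the small-energy regularity of Lemma~\ref{lem:small-energy-regul} and Lemma~\ref{lem:interior-small-energy-regul}. Part (2) follows by a potential (Green representation) argument for $\nabla u_n$ near the flat boundary.

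\textbf{Part (1).} Fix $x_0\in B^+_{1/2}\setminus\{0\}$ and set $r=|x_0|/2$. Rescale via
\[
v(y)=u(x_0+ry)+\ln r,\qquad \varphi(y)=r^{1/2}\psi(x_0+ry).
\]
By conformal invariance, $(v,\varphi)$ solves the same system on the rescaled domain. The energy $\int(e^{2v}+|\varphi|^4)$ over the rescaled region equals the energy of $(u,\psi)$ on $B_{r}(x_0)\cap\Omega$. This tends to $0$ as $|x_0|\to0$, because $E(u,\psi;\Omega)<\infty$ by Lemma~\ref{lem:compactness-away-blowup}.

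There are two cases.
\begin{itemize}
\item If $B_{r}(x_0)\subset\mathbb R^2_+$, then Lemma~\ref{lem:interior-small-energy-regul} gives $v^+(0)+|\varphi(0)|\le C$.
\item If $x_0$ is close to $\partial^0B^+$, I instead rescale around the boundary point $(s_0,0)$ on the half ball of radius comparable to $|x_0|$. The Dirichlet data become $u_0(s_0+r\,\cdot)+\ln r$, whose oscillation is $O(r)$, and whose value at $0$ is $u_0+\ln r\le C$. The chiral data stay zero. So Lemma~\ref{lem:small-energy-regul}(1) applies.
\end{itemize}
Either way we get $u(x_0)+\ln|x_0|\le C$ and $\sqrt{|x_0|}|\psi(x_0)|\le C$. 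For the gradient bound, the rescaled pair is then bounded in $L^\infty$ from above, and the right-hand side $2e^{2v}-e^v|\varphi|^2$ is bounded. Applying interior/boundary $W^{2,p}$ estimates to $v$ minus its value requires an $L^1$-type control of $v$. I would obtain it from the Green representation
\[
u=w+\int G\,(2e^{2u}-e^u|\psi|^2)
\]
with $f:=2e^{2u}-e^u|\psi|^2\in L^1$. Alternatively, I would use Harnack on the harmonic part after subtracting a bounded Newtonian part. This gives $|\nabla v(0)|\le C$, i.e.\ $|x_0||\nabla u(x_0)|\le C$.

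\textbf{Part (2).} On $\partial^0B^+$ we have $u_n=u_0$, so $\partial_s u_n=\partial_s u_0$ is bounded and contributes $O(r^2)$. Only $\partial_t u_n$ matters. Write $u_n=w+z_n$, where $w$ is the harmonic function with boundary value $u_0$ (smooth up to the boundary), and $z_n$ vanishes on $\partial\Omega$ with $-\Delta z_n=f_n:=2e^{2u_n}-e^{u_n}|\psi_n|^2$. Using the Dirichlet Green function on $\Omega$ (locally comparable to the half-plane one), the normal derivative at $(s,0)$ is
\[
\partial_t z_n(s,0)=\int_\Omega P(s,y)f_n(y)\,dy,\qquad |P(s,y)|\le \frac{C\,y_2}{|(s,0)-y|^2}.
\]
Therefore
\[
\int_{-r}^{r}|s||\partial_t z_n|\,ds\le\int_\Omega|f_n(y)|\,K_r(y)\,dy,\qquad K_r(y)=\int_{-r}^r\frac{|s|\,y_2}{(s-y_1)^2+y_2^2}\,ds.
\]
The kernel satisfies $K_r\le C r$, since the Poisson kernel integrates to at most $\pi$ and $|s|\le r$. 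Also $K_r(y)\le Cr^2/|y|$ when $|y|\ge 2r$, and hence $K_r\to0$ uniformly on $\{|y|\ge\delta\}$. Since $\|f_n\|_{L^1}\le C$ by the energy bound,
\[
\int_{-r}^{r}|s||\partial_t z_n|\,ds\le Cr.
\]
This gives (2): the limit in $n$ is bounded by $Cr$, which tends to $0$ as $r\to0$. The main care is that $\partial_t u_n$ on the boundary might not converge pointwise near $0$ (it is a blow-up point), so we bound the $\limsup$ rather than pass to the limit. The bound is uniform in $n$ because only $\|f_n\|_{L^1}$ enters.

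The obstacle I anticipate is the gradient estimate in (1), since $u$ is not bounded below. I would handle it via the Green representation of $\nabla u$ on each dyadic annulus, splitting $f=2e^{2u}-e^u|\psi|^2$ into its near part on $B_{|x|/2}(x)$ and its far part. The near part is bounded after rescaling by the $L^\infty$ bounds just obtained, giving a $C/|x|$ contribution. The far part is at most $C\|f\|_{L^1}/|x|$.
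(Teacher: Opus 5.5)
Your proposal is correct. For the first two bounds in (1), and for all of (2), it follows the paper's route.

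In (1) you rescale on a ball or half ball of radius comparable to $|x|$ and apply Lemmas~\ref{lem:interior-small-energy-regul} and \ref{lem:small-energy-regul}. In (2) you use the Green representation of $u_n-w$ together with $\|f_n\|_{L^1}\le C$ and the fact that the boundary Poisson kernel has total mass one. The paper gets this from positivity via Hopf and the identity $1=-\int_{\partial\Omega}\partial_{\nu_x}G\,ds_x$; your pointwise bound $|P|\le Cy_2/|(s,0)-y|^2$ yields the same $Cr$ estimate.

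The real difference is the gradient bound. The paper appeals in one line to standard elliptic estimates after rescaling. You observe, rightly, that the rescaled $v$ is only bounded above, so local estimates on the rescaled ball cannot control $\nabla v(0)$: think of $v=-M+My_1$. You supply the missing global input through the near/far splitting of $\int\nabla_xG\,f$. This is exactly the computation the paper carries out later in \eqref{inequ:01}, and your version is the more careful one here.

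Two points need tightening.

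First, the representation $u-w=\int_\Omega G f$ for the blow-up limit is not automatic. Passing to the limit in $u_n-w=\int G f_n$ could a priori leave a concentrated contribution at $0$. It vanishes here only because $0\in\partial\Omega$: for $x\neq0$, $G(x,\cdot)$ is continuous and zero at $y=0$, while $\|f_n\|_{L^1}\le C$. This is the paper's argument for \eqref{equat:08}, and you should state it. Alternatively, bypass the limit representation entirely. For fixed $x\neq0$ and $n$ large, $|f_n|\le|f|+1\le C|x|^{-2}$ on $B_{|x|/2}(x)\cap\Omega$, by the first two bounds and $C^0$ convergence away from $0$. Your splitting applied to $u_n$ then gives $|\nabla u_n(x)|\le C/|x|$ with $C$ independent of $x$ and $n$, and you let $n\to\infty$.

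Second, drop the Harnack alternative. Applied to the nonnegative harmonic function $C-v^2$, it only yields $|\nabla v(0)|\le C(1+|v(0)|)$. That bound is not uniform wherever $u(x)+\ln|x|$ is very negative, so it cannot replace the Green splitting.

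A minor point: the small-energy argument covers only $|x|<r_0$. On $B^+_{1/2}\setminus B_{r_0}$ the bounds come from smoothness of $(u,\psi)$ away from $0$, as the paper notes. In the boundary case, choose the half-ball radius smaller than $|s_0|$ so that the origin is excluded.
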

	\begin{proof}
		For the first conclusion, since $e^{2u}\in L^1(B^+)$, we can choose $r_0>0$ sufficiently small such that
		$$
		\int_{B^+_{2r_0}}e^{2u}\,dx<\epsilon_1.
		$$
		For any $x\in B^+_{r_0}\setminus\{0\}$, by a standard rescaling on a ball or half ball with radius comparable to $|x|$ and the small energy regularity, we obtain
		$$
		u(x)+\ln|x|\leq C,\qquad
		\sqrt{|x|}|\psi(x)|\leq C.
		$$
		Moreover, by the standard elliptic estimates applied to
		$$
		-\Delta u=2e^{2u}-e^u|\psi|^2,
		$$
		after the same rescaling, we get
		$$
		|x||\nabla u(x)|\leq C.
		$$
		Since $(u,\psi)$ is smooth away from $0$, the same estimates hold in
		$B^+_{\frac12}\setminus B^+_{r_0}$. Therefore
		$$
		u(x)+\ln|x|\leq C,\qquad
		\sqrt{|x|}|\psi(x)|\leq C,\qquad
		|x||\nabla u(x)|\leq C
		$$
		for any $x\in B^+_{\frac12}(0)\setminus\{0\}$.
		
		For the second conclusion, let $w(x)$ be the solution of
		\begin{align*}
			\begin{cases}
				-\Delta w(x)=0\ \ &in\ \ \Omega,\\
				w(x)=u_0(x)\ \ &on\ \ \partial\Omega.
			\end{cases}
		\end{align*}
		By standard elliptic theory, we have
		$$
		\| w\|_{C^1(\overline{\Omega})}\leq C.
		$$
		Since $u_n-w=0$ on $\partial\Omega$, by the Green representation formula, we have
		\begin{equation}\label{equat:03}
			u_n(x)-w(x)
			=
			\int_\Omega G(x,y)
			\left(
			2e^{2u_n(y)}-e^{u_n(y)}|\psi_n(y)|^2
			\right)dy.
		\end{equation}
		where $G(x,y)$ is the Green function of $\Omega$ with the Dirichlet boundary condition. By H\"older's inequality and the energy bound,
		$$
		\int_\Omega
		\left|2e^{2u_n}-e^{u_n}|\psi_n|^2\right|dy
		\leq C.
		$$
		
		For any fixed $y\in\Omega$, since $G(x,y)>0$ in $\Omega$ and $G(x,y)=0$ on $\partial\Omega$, by Hopf's lemma, we have
		$$
		-\frac{\partial G}{\partial\nu_x}(x,y)\geq0,
		\qquad x\in\partial\Omega.
		$$
		On the other hand, applying the Green representation formula to the harmonic function $1$, we get
		$$
		1=
		-\int_{\partial\Omega}
		\frac{\partial G}{\partial\nu_x}(x,y)\,ds_x.
		$$
		Differentiating the Green representation formula on $\partial\Omega$, we obtain
		\begin{align*}
			\int_{\partial^0B_r^+}
			\left|
			\frac{\partial (u_n-w)}{\partial\nu}
			\right|ds
			&\leq
			\int_\Omega
			\left|2e^{2u_n(y)}-e^{u_n(y)}|\psi_n(y)|^2\right|
			\int_{\partial^0B_r^+}
			-\frac{\partial G}{\partial\nu_x}(x,y)\,ds_xdy\\
			&\leq
			\int_\Omega
			\left|2e^{2u_n(y)}-e^{u_n(y)}|\psi_n(y)|^2\right|dy\leq C.
		\end{align*}
		Hence
		$$
		\int_{\partial^0B_r^+}
		\left|\frac{\partial u_n}{\partial\nu}\right|ds\leq C.
		$$
		Since $u_n-w=0$ on $\partial\Omega$, we also have
		$$
		\left|\frac{\partial u_n}{\partial s}\right|
		=
		\left|\frac{\partial w}{\partial s}\right|
		\leq C
		$$
		on $\partial^0B_r^+$. Therefore,
		\begin{align*}
			\int_{\partial^0B_r^+}|s||\nabla u_n|ds
			&\leq
			r\int_{\partial^0B_r^+}
			\left|\frac{\partial u_n}{\partial\nu}\right|ds
			+
			\int_{\partial^0B_r^+}|s|
			\left|\frac{\partial u_n}{\partial s}\right|ds\leq Cr.
		\end{align*}
		Letting $n\to\infty$ and then $r\to0$, we get the second conclusion.
	\end{proof}
	
	\
	
	\begin{lem}\label{lem:boundary-radial-gradient-vanishing}
		Under assumptions of Lemma \ref{lem:boundary-decay-estimates}, we have
		$$
		\lim_{r\to 0}\lim_{n\to\infty}
		r\int_{\partial^+B_r^+}|\nabla u_n|^2d\sigma=0.
		$$
	\end{lem}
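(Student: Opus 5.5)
Since $0$ is the only blow-up point in $B_1^+$, Lemma \ref{lem:compactness-away-blowup} gives $u_n\to u$ in $C^2_{\mathrm{loc}}(\overline{B_1^+}\setminus\{0\})$. For each fixed $r$ the circle arc $\partial^+B_r^+$ stays away from $0$, so
$$
\lim_{n\to\infty} r\int_{\partial^+B_r^+}|\nabla u_n|^2d\sigma=r\int_{\partial^+B_r^+}|\nabla u|^2d\sigma .
$$
It therefore suffices to show that $r\int_{\partial^+B_r^+}|\nabla u|^2\,d\sigma\to0$ as $r\to0$. The pointwise bound $|x||\nabla u|\le C$ from Lemma \ref{lem:boundary-decay-estimates} only gives boundedness, so I need to upgrade it to $|x||\nabla u(x)|=o(1)$ as $x\to0$.

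\textbf{Green representation of the limit.} Pass to the limit in \eqref{equat:03}. The measures $(2e^{2u_n}-e^{u_n}|\psi_n|^2)\,dy$ are bounded in $L^1$, so a subsequence converges weakly to $f\,dy+\beta\delta_0$, where $f=2e^{2u}-e^{u}|\psi|^2\in L^1$ and $\beta\in\mathbb R$. Since $G(x,0)=0$ for $0\in\partial\Omega$, the Dirac mass makes no contribution at points $x\neq0$; more precisely, because $G(x,\cdot)$ vanishes on $\partial\Omega$, the atom at $0$ contributes nothing for $x$ away from $0$. To justify this rigorously I would use that $G(x,y)\le C\,\mathrm{dist}(y,\partial\Omega)/|x-y|\cdot\ldots$ is small for $y$ near $0$ when $x$ stays away from $0$. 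This gives
$$
u(x)=w(x)+\int_\Omega G(x,y)f(y)\,dy .
$$
Differentiating, and using $|\nabla_xG(x,y)|\le C/|x-y|$, I obtain
$$
|x||\nabla u(x)|\le C|x|+\int_\Omega\frac{|x|}{|x-y|}|f(y)|\,dy.
$$

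\textbf{Showing the integral is $o(1)$.} Split the domain of integration into three regions.
\begin{itemize}
\item[(a)] $|y|\ge\delta$. For fixed $\delta$ this piece is $O(|x|)$, hence tends to $0$.
\item[(b)] $|y|<\delta$ with $|x-y|\ge|x|/2$. Here $\frac{|x|}{|x-y|}\le2$, so the piece is at most $2\int_{B_\delta}|f|$, which is small for small $\delta$.
\item[(c)] $|x-y|<|x|/2$. Here $|y|\sim|x|$, and by the first conclusion of Lemma \ref{lem:boundary-decay-estimates}, $|f(y)|\le C|y|^{-2}$. A crude bound then gives $O(1)$ only, so I would refine it using $|f|\in L^1$ together with the small-energy regularity. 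On the annulus $|y|\sim|x|$ the energy $\varepsilon(|x|)=\int_{B_{2|x|}\setminus B_{|x|/4}}|f|$ tends to $0$. Rescaling the annulus to unit size, the rescaled $|f|$ is bounded and has $L^1$ mass $\varepsilon$, so its $L^p$ norm is at most $C\varepsilon^{1/p}$. Pairing with $1/|z|\in L^{p'}$, $p'<2$, the piece is $\lesssim\varepsilon(|x|)^{1/p}\to0$.
\end{itemize}

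Letting $x\to0$ and then $\delta\to0$ yields $|x||\nabla u(x)|\to0$. Consequently $r\int_{\partial^+B_r^+}|\nabla u|^2\le \pi\sup_{|x|=r}(|x||\nabla u|)^2\to0$, which proves the lemma.

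\textbf{Main obstacle.} The delicate step is justifying that the defect measure $\beta\delta_0$ at the boundary point leaves no trace in $\nabla u$ near $0$. The naive kernel $\nabla_xG(x,y)$ for $y$ near $0$ is not small. The remedy is that the approximating mass sits at points $y_n$ with $\mathrm{dist}(y_n,\partial\Omega)\to0$. There I would use the sharper boundary bound $|\nabla_xG(x,y)|\le C\,\mathrm{dist}(y,\partial\Omega)/|x-y|^2$, valid for $|x-y|\gtrsim|y|$, so this contribution vanishes as $n\to\infty$ for each fixed $x\neq0$. If that proves awkward, an alternative is to work directly with $u$: it is a solution on $B^+\setminus\{0\}$ with $f\in L^1$, and the difference $u-w-\int Gf$ is harmonic in the punctured half-disk, vanishes on the flat boundary, and is $O(|\ln|x||)$ by the first conclusion of Lemma \ref{lem:boundary-decay-estimates}. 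A Poisson-kernel-type singularity $t/|x|^2$ is then excluded by this growth bound, so the difference is removable.
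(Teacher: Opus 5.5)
Your proposal is correct and follows essentially the same route as the paper. You reduce to the limit $u$ via $C^2_{loc}$ convergence, write $u-w=\int_\Omega G(x,y)f(y)\,dy$ (the concentrated mass drops out because $G(x,\cdot)$ is continuous with $G(x,0)=0$), and show $|x||\nabla u(x)|\to0$ by splitting $\int_\Omega|x-y|^{-1}|f(y)|\,dy$ into a piece near $x$, controlled by $|f|\le C|y|^{-2}$, and a remainder controlled by the smallness of $\int_{B_\rho^+}|f|$.

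The differences are minor. The paper takes the near region to be $B_{\epsilon|x|}(x)$, so the crude pointwise bound already gives $C\epsilon/|x|$ and no $L^1$--$L^\infty$ interpolation is needed. Also, your ``main obstacle'' is not a real one: once the representation of $u$ holds on $\overline\Omega\setminus\{0\}$, you can differentiate it directly, and no atom enters $\nabla u$.
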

	
	\begin{proof}
Without loss of generality, we assume $\Sigma=\{0\}$.
		For any $x\in\overline{\Omega}\setminus \{0\}$, on one hand, since $\|2e^{2u_n} - e^{u_n} |\psi_n|^2\|_{L^\infty(\Omega\setminus B_\delta(0))}\leq C$, by dominated convergence theorem, we have
		\begin{align*}
			\lim_{\delta\to 0}\lim_{n\to\infty}\int_{\Omega\setminus B_\delta(0)} G(x,y)\left(2e^{2u_n(y)} - e^{u_n(y)} |\psi_n(y)|^2\right)dy&= \lim_{\delta\to 0}\int_{\Omega\setminus B_\delta(0)} G(x,y)\left(2e^{2u(y)} - e^{u(y)} |\psi(y)|^2\right)dy\\
			&=\int_{\Omega} G(x,y)\left(2e^{2u(y)} - e^{u(y)} |\psi(y)|^2\right)dy.
		\end{align*}
		On the other hand, since $G(x,y)$ is continuous at $y=0$, for any $\epsilon>0$, there exists $\delta_0=\delta_0(\epsilon)>0$ such that when $y\in B^+_{\delta_0}(0)$, there holds $|G(x,y)|<\epsilon$. Thus, for any $\delta<\delta_0$, we get
		\begin{align*}
			\left|\int_{ B^+_\delta(0)} G(x,y)\left(2e^{2u_n(y)} - e^{u_n(y)} |\psi_n(y)|^2\right)dy\right|\leq C\epsilon,
		\end{align*} which implies
		\begin{align*}
			\lim_{\delta\to 0}\lim_{n\to\infty}\int_{B^+_\delta(0)} G(x,y)\left(2e^{2u_n(y)} - e^{u_n(y)} |\psi_n(y)|^2\right)dy=0.
		\end{align*}
		
		Now letting $n\to\infty$ in \eqref{equat:03}, for any $x\in\overline{\Omega}\setminus \{0\}$, we obtain
		\begin{align}\label{equat:08}
			u(x)-w(x)&=\lim_{n\to\infty}\int_\Omega G(x,y)\left(2e^{2u_n(y)} - e^{u_n(y)} |\psi_n(y)|^2\right)dy\notag\\
			&=\lim_{\delta\to 0}\lim_{n\to\infty}\int_{\Omega\setminus B_\delta(0)} G(x,y)\left(2e^{2u_n(y)} - e^{u_n(y)} |\psi_n(y)|^2\right)dy\notag\\&\quad + \lim_{\delta\to 0}\lim_{n\to\infty}\int_{ B_\delta(0)} G(x,y)\left(2e^{2u_n(y)} - e^{u_n(y)} |\psi_n(y)|^2\right)dy \notag\\
			&=\int_{\Omega} G(x,y)\left(2e^{2u(y)} - e^{u(y)} |\psi(y)|^2\right)dy.
		\end{align}
		
		For any $\epsilon\in (0,\frac{1}{100})$, take $\rho=\rho(\epsilon)>0$ small such that $$\int_{B^+_{\rho}} \left|2e^{2u(y)} - e^{u(y)} |\psi(y)|^2\right|dy<\epsilon^2.$$
		Thus, for any $x\in B^+_{\frac{1}{4}\rho}$, by \eqref{equat:08} and the first conclusion of Lemma \ref{lem:boundary-decay-estimates}, we have
		\begin{align}\label{inequ:01}
			&|\nabla u(x)|\notag\\&\leq \frac{1}{2\pi}\int_\Omega \frac{1}{|x-y|}\left|2e^{2u(y)} - e^{u(y)} |\psi(y)|^2\right|dy+O(1)\notag\\
			&=\frac{1}{2\pi}\int_{B_{\epsilon |x|}(x)} \frac{1}{|x-y|}\left|2e^{2u(y)} - e^{u(y)} |\psi(y)|^2\right|dy +\frac{1}{2\pi}\int_{B^+_\rho\setminus B_{\epsilon |x|}(x)} \frac{1}{|x-y|}\left|2e^{2u(y)} - e^{u(y)} |\psi(y)|^2\right|dy\notag\\&\quad  +\frac{1}{2\pi}\int_{\Omega\setminus B^+_\rho} \frac{1}{|x-y|}\left|2e^{2u(y)} - e^{u(y)} |\psi(y)|^2\right|dy+O(1)\notag\\
			&\leq \frac{C}{|x|^2}\int_{B_{\epsilon |x|}(x)} \frac{1}{|x-y|}dy+\frac{C}{\epsilon|x|}\int_{B^+_\rho} \left|2e^{2u(y)} - e^{u(y)} |\psi(y)|^2\right|dy+\frac{O(1)}{\rho}\notag\\
			&\leq C\left(\frac{\epsilon}{|x|}+\frac{1}{\rho}\right).
		\end{align}
		Then we have
		\begin{align*}
			\lim_{r\to 0}\lim_{n\to\infty}r\int_{\partial^+B_r^+}|\nabla u_n|^2d\sigma&=\lim_{r\to 0}r\int_{\partial^+B_r^+}|\nabla u|^2d\sigma\\
			&\leq C\lim_{r\to 0}r\int_{\partial^+B_r^+}\left(\frac{\epsilon}{|x|^2}+\frac{1}{\rho^2}\right)d\sigma\\
			&\leq C\epsilon,
		\end{align*} which immediately implies the conclusion of the lemma.
		
	\end{proof}

	\
	
	\section{Proof of Theorem \ref{thm:main}}\label{sec:proof-of-theorem}
	
	\

In this section, we will prove our main Theorem \ref{thm:main}.

\
	
	Let $x_n\in \overline{B^+}$ be the point such that $$u_n(x_n)=\sup_{B^+}u_n(x).$$ Since $0\in \Sigma$ is the only blow-up point in $\overline{B^+}$, it is easy to check that $$x_n\to 0\ \ and \ \ u_n(x_n)\to +\infty\ \ as \ \ n\to\infty.$$

	Set $\lambda_n:=e^{ -u_n(x_n)}$ and $$v_n(x):=u_n(x_n+\lambda_nx)+\log\lambda_n,\ \ \zeta_n(x):=\sqrt{\lambda_n}\psi_n(x_n+\lambda_nx).$$ Denote $$\Omega_n:=\{x\in\R^2 |x_n+\lambda_nx\in B^+\}$$ and $$d_n:=dist(x_n,\partial^0B^+).$$
	
	We first claim the following lemma.
	\begin{lem}\label{lem:distance-to-boundary-scale}
		Passing to a subsequence, we have
		$$
		\lim_{n\to\infty}\frac{d_n}{\lambda_n}=+\infty.
		$$
	\end{lem}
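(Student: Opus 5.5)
We argue by contradiction. Suppose that, along a subsequence, $d_n/\lambda_n\to a\in[0,+\infty)$. Since $u_n(x_n)=\sup_{B^+}u_n\to+\infty$, we have $\lambda_n\to0$. The rescaled domains $\Omega_n$ then converge to the half plane $\mathbb R^2_a=\{t>-a\}$, with flat boundary $\partial\mathbb R^2_a$.

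First we collect the rescaled problem. By conformal invariance, $(v_n,\zeta_n)$ solves the same system in $\Omega_n$:
\[
-\Delta v_n=2e^{2v_n}-e^{v_n}|\zeta_n|^2,\qquad \slashed D\zeta_n=-e^{v_n}\zeta_n ,
\]
with chiral condition $\mathbf B\zeta_n=0$ on the flat part of $\partial\Omega_n$. The Dirichlet data there are
\[
v_n=u_0(x_n+\lambda_n x)+\log\lambda_n .
\]
These data tend to $-\infty$ uniformly, and their oscillation on bounded sets is $O(\lambda_n)$. In addition, $v_n\le v_n(0)=0$, and the scale-invariant quantity $\int(e^{2v_n}+|\zeta_n|^4)$ is bounded by $C$.

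Next we get local bounds near the boundary. Fix $R>0$ and take any boundary point $p$ of $\partial\Omega_n$ within distance $R$ of the origin. Because $v_n\le0$, the energy $\int_{B_\delta(p)\cap\Omega_n}e^{2v_n}\le\pi\delta^2$ falls below $\epsilon_1$ once $\delta$ is chosen small, independently of $n$. Rescale $B^+_\delta(p)$ to the unit half ball; the rescaling adds only the bounded constant $\log\delta$. Then Lemma~\ref{lem:small-energy-regul}(2) applies with $\phi_n(0)\to-\infty$, $\mathrm{osc}\,\phi_n\to0$, $\mathbf B\varphi_n=0$ and $\int|\psi|^4\le C$. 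It gives $v_n\to-\infty$ uniformly on $B^+_{\delta/2}(p)$. Covering the strip of width $\delta/2$ along $\partial\Omega_n\cap B_{R}$ by finitely many such half balls, we conclude that $v_n\to-\infty$ uniformly on
\[
\{x\in\Omega_n\cap B_R:\ \mathrm{dist}(x,\partial\Omega_n)<\delta/2\}.
\]

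Finally we derive the contradiction. Since $\mathrm{dist}(0,\partial\Omega_n)=d_n/\lambda_n\to a$, there are two cases.

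If $a<\delta/2$, the origin lies in the strip above. This contradicts $v_n(0)=0$ directly.

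If $a\ge\delta/2$, we compare $v_n$ along the segment from $0$ to the nearest boundary point. The key bound needed is that $v_n$ stays bounded below near $0$. We obtain it as follows:
\begin{itemize}
\item $v_n\le0$ on $B_{a/2}(0)$, so the small-energy lemma in interior form (Lemma~\ref{lem:interior-small-energy-regul}, applied on balls of radius $\min(a/2,\delta)$ where $\int e^{2v_n}<\epsilon_0$) bounds $|\zeta_n|$ locally.
\item Hence $|\Delta v_n|\le C$ on $B_{a/2}(0)$.
\item Harnack applied to $-v_n$ plus a bounded correction then gives $v_n\ge -C$ on compact subsets of $\Omega_n\cap B_R$ connected to $0$.
\end{itemize}
Chaining such balls up to points at distance $<\delta/2$ from $\partial\Omega_n$ makes $v_n$ bounded below there, against $v_n\to-\infty$ in the strip.

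The main obstacle is this Harnack chain. Since $v_n\le0$ everywhere, $-v_n\ge0$ with $|\Delta v_n|$ bounded on compact sets away from the boundary, and the scale-invariant estimate of $\zeta_n$ holds on the whole region where $v_n\le0$. This justifies the weak Harnack inequality for $-v_n+C|x|^2$: if $-v_n$ were finite at one point of a connected compact set, it would be bounded on all of it. Taking $R$ fixed and $n\to\infty$, this contradiction proves that $d_n/\lambda_n\to+\infty$.
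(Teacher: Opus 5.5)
Your proof is correct and uses the same mechanism as the paper. The rescaled Dirichlet data $u_0(x_n+\lambda_n x)+\log\lambda_n$ tend to $-\infty$, so Lemma~\ref{lem:small-energy-regul}(2) forces $v_n\to-\infty$ near the flat boundary, which is incompatible with $v_n(0)=0$.

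The difference is in how the propagation from the boundary to the origin is justified. The paper translates by $\tilde x_n=(0,-d_n/\lambda_n)$, so that the boundary point below $x_n$ becomes the origin. It then applies Lemma~\ref{lem:small-energy-regul}(2) in one step to get $\tilde v_n\to-\infty$ uniformly on $B^+_{R+1}(0)$. Read literally, this requires the energy $\int e^{2\tilde v_n}$ over a half ball of radius comparable to $R+1$ (the unit half ball after rescaling) to be below $\epsilon_1$. The bound $\tilde v_n\le 0$ alone only controls it by a multiple of $(R+1)^2$.

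You instead apply the lemma only on half balls of a fixed radius $\delta$ with $\pi\delta^2<\min\{\epsilon_0,\epsilon_1\}$, where smallness really does follow from $v_n\le0$. You then connect the origin to the boundary strip by an interior Harnack chain, using Lemma~\ref{lem:interior-small-energy-regul} to get $|\Delta v_n|\le C$ at fixed positive distance from $\partial\Omega_n$. This costs a case distinction and a chain argument, but it supplies exactly the step the paper's one-line application leaves implicit. Equivalently, one could rerun Step~3 of the proof of Lemma~\ref{lem:small-energy-regul} with ``$\tilde v_n\le0$, $|\Delta\tilde v_n|\le C$'' in place of the small-energy hypothesis.

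Two small corrections to your write-up:
\begin{enumerate}
\item For large $C$ the function $-v_n+C|x|^2$ is subharmonic, so the weak Harnack inequality, which concerns nonnegative supersolutions, is not the right tool for it. What you actually use is the full Harnack inequality for $w=-v_n\ge0$ with $|\Delta w|\le C$. For instance, on each ball split $w=h+g$ with $g$ bounded and $h$ harmonic and bounded below.
\item Half balls of radius $\delta/2$ centred on $\partial\Omega_n$ cover only a slightly thinner strip than width $\delta/2$. End the chain at a point at distance, say, $\delta/8$ from the boundary.
\end{enumerate}
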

	
	\begin{proof}
		We prove the lemma by a contradiction argument. Otherwise, there exists $R\in [0,+\infty)$ such that $\lim_{n\to\infty}\frac{d_n}{\lambda_n}=R$. Denote $x_n=(s_n,t_n)$ and $\tilde{x}_n=(0,-\frac{d_n}{\lambda_n})=(0,-\frac{t_n}{\lambda_n})$. Setting $$\tilde{v}_n(x):=v_n(x+\tilde{x}_n),\ \ \tilde{\zeta}_n(x):=\zeta_n(x+\tilde{x}_n),$$ we can easily check that $\tilde{v}_n(x)\leq 0$, $\tilde{v}_n(-\tilde{x}_n)= 0$, $\tilde{v}_n(0)=u_0(s_n,0)+\ln\lambda_n$ and
		\begin{equation}
			\begin{cases}
				-\Delta \tilde{v}_n= 2e^{2\tilde{v}_n} - e^{\tilde{v}_n} |\tilde{\zeta}_n|^2 \ \ &in\ \  B^+_{\frac{1}{2}\lambda_n^{-1}}(0),\\
				\slashed{D}\tilde{\zeta}_n= -e^{\tilde{v}_n}\tilde{\zeta}_n \ \ &in\ \ \ B^+_{\frac{1}{2}\lambda_n^{-1}}(0),\\
				\tilde{v}_n(s,0)=u_0(s_n+\lambda_ns,0)+\ln\lambda_n  \ \ &on\ \  \partial ^0B^+_{\frac{1}{2}\lambda_n^{-1}}(0),\\
				\mathbf{B}\tilde{\zeta}_n=0  \ \ &on\ \  \partial ^0B^+_{\frac{1}{2}\lambda_n^{-1}}(0).
			\end{cases}
		\end{equation}
		Since $\lim_{n\to\infty}\tilde{v}_n(0)=-\infty$, by Lemma \ref{lem:small-energy-regul}, we get $\tilde{v}_n(x)\to -\infty$ uniformly in $B^+_{R+1}(0)$ as $n\to\infty$. This is a contradiction to $\tilde{v}_n(-\tilde{x}_n)= 0$. We proved this lemma.
	\end{proof}

	\
	
	\begin{prop}[Energy identity]\label{prop:neck-spinor-energy-vanishing}
		For $R>1$ and $0<\delta<1$, denote
		$$
		\mathcal N_{\delta,R,n}
		:=
		B^+\cap
		\left\{
		x:
		R\lambda_n\leq |x-x_n|\leq\delta
		\right\}.
		$$
		Suppose that $\mathcal N_{\delta,R,n}$ contains no further bubble. Then we have
		$$
		\lim_{\delta\to0}
		\lim_{R\to+\infty}
		\limsup_{n\to\infty}
		\int_{\mathcal N_{\delta,R,n}}
		|\psi_n|^4\,dx
		=0.
		$$
	\end{prop}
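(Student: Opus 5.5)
The plan is to follow the Ding--Tian reduction: cover the neck by dyadic half-annuli and apply the annular estimate of Lemma~\ref{lem:boundary-annular-spinor-estimate} on a long annulus with small $e^{2u_n}$-energy. By Lemma~\ref{lem:distance-to-boundary-scale}, $d_n/\lambda_n\to\infty$. Two geometries of the neck are possible, and I would treat them separately by introducing the intermediate scale $d_n$. On the inner part $\{R\lambda_n\le|x-x_n|\le d_n/2\}$ the region is an interior annulus around $x_n$, and the interior annular estimate (Lemma 3.1 of \cite{Jost-Wang-Zhou-Zhu-1}) applies directly. On the outer part $\{2d_n\le |x-x_n|\le\delta\}$ I would recentre at the boundary point $x_n':=(s_n,0)$. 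There $|x-x_n|\sim|x-x_n'|$, so the region is comparable to a half-annulus $A^+_{r_1,r_2}$ centred on $\partial^0B^+$, and Lemma~\ref{lem:boundary-annular-spinor-estimate} applies. The transition annulus $\{d_n/2\le|x-x_n|\le 2d_n\}$ is handled via the rescaling $x\mapsto x_n'+d_n x$, discussed below.

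The key input is the "no further bubble" hypothesis, which I would turn into smallness of $e^{2u_n}$ on every dyadic piece. Concretely, I would show
$$\lim_{\delta\to0}\lim_{R\to\infty}\limsup_{n\to\infty}\ \sup_{\rho\in[R\lambda_n,\delta/2]}\int_{B^+\cap\{\rho\le|x-x_n|\le2\rho\}}e^{2u_n}\,dx=0 .$$
I would argue by contradiction. Suppose some $\rho_n$ with $\rho_n/\lambda_n\to\infty$ and $\rho_n\to0$ carries energy at least $\varepsilon>0$. Rescale by $\rho_n$ around $x_n$ (or around $x_n'$ when $\rho_n\gtrsim d_n$). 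The rescaled energies are bounded, and by Lemmas~\ref{lem:interior-small-energy-regul} and \ref{lem:small-energy-regul} either the sequence converges smoothly on the annulus $\{1\le|x|\le2\}$ or a blow-up point appears. A blow-up point is excluded by the hypothesis. Smooth convergence to a nontrivial limit is also excluded. At the boundary, the rescaled Dirichlet data is $u_0+\ln\rho_n\to-\infty$, so case (2) of Lemma~\ref{lem:small-energy-regul} forces $u\to-\infty$ near $\partial^0$. In the interior, a nontrivial limit would itself be a further bubble, which is forbidden. Hence the energy tends to $0$, a contradiction. The same compactness shows that $\int|\psi_n|^4$ on the boundary annuli $\{R\lambda_n\le|x-x_n|\le2R\lambda_n\}$ and $\{\delta/2\le|x-x_n|\le\delta\}$ is small. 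For the inner end, the bubble limit $(v,\zeta)$ has finite energy on $\R^2$, so $\int_{R\le|x|\le2R}|\zeta|^4\to0$ as $R\to\infty$ by scale invariance of $|\psi|^4dx$. For the outer end, $\psi\in L^4$ and $\psi_n\to\psi$ away from $0$. The transition annulus is treated by the same $d_n$-rescaling.

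With these in hand, I would apply the annular estimate on each part with $r_1=R\lambda_n$, $r_2=d_n$, and with $r_1=d_n$, $r_2=\delta$. The smallness of $\int e^{2u_n}$ over the whole long annulus is the delicate point, because summing dyadic smallness does not give global smallness. I would therefore follow Ding--Tian: split each annulus into at most $N$ consecutive sub-annuli, each with $\int e^{2u_n}<\varepsilon$. This is possible since the total energy is bounded by $C$, so $N\le C/\varepsilon$. The sub-annuli are separated by dyadic shells, where both energies are small by the claim above. On each sub-annulus, choosing $\varepsilon$ with $C_0\sqrt{\varepsilon}<1/2$ lets me absorb the first term of Lemma~\ref{lem:boundary-annular-spinor-estimate} into the left side. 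This bounds $\|\psi_n\|_{L^4}$ on the sub-annulus by its boundary shells, which are small. Summing over the finitely many sub-annuli gives the result. I expect the main obstacle to be this absorption step: the right-hand $L^4$ integral is over the slightly larger annulus $A_{r_1,r_2}$, and it must be controlled through the small shells and through the bounded number of pieces. The boundary/interior transition at scale $d_n$ needs equal care, to ensure that no energy hides there.
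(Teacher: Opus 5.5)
Your architecture matches the paper's. You use interior annuli around $x_n$ up to scale $d_n$, boundary half-annuli recentred at $(s_n,0)$ beyond it, a transition shell at scale $d_n$, and a Ding--Tian splitting into boundedly many pieces with small $e^{2u_n}$-energy. The gap is in the input to the absorption step.

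Your dyadic claim, and the contradiction argument behind it, control only $\int e^{2u_n}$ on each shell. For $\int|\psi_n|^4$ you prove smallness only on the two end shells $\{R\lambda_n\le|x-x_n|\le 2R\lambda_n\}$ and $\{\delta/2\le|x-x_n|\le\delta\}$. But Lemma~\ref{lem:boundary-annular-spinor-estimate} (and its interior analogue) produces error terms $\int_{A_{r_1,2r_1}}|\psi_n|^4$ and $\int_{A_{r_2/2,r_2}}|\psi_n|^4$ at every intermediate cut radius. Your sentence ``the sub-annuli are separated by dyadic shells, where both energies are small by the claim above'' is therefore unjustified. The transition annulus has the same problem: the $d_n$-rescaling with Lemma~\ref{lem:small-energy-regul}(2) gives $u_n\to-\infty$ there, which kills $e^{2u_n}$, but says nothing about $\psi_n$.

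Concretely, in the alternative where the rescaled $u_n\to-\infty$ locally uniformly, the $e^{2u}$-energy vanishes. The rescaled spinors, however, converge locally smoothly to a solution of $\slashed D\widehat\psi=0$ with finite $L^4$-norm. This solution lives on $\R^2\setminus\{0\}$, or on a half-plane minus a point with $\mathbf B\widehat\psi=0$. Nothing in your argument prevents this limit from carrying $L^4$-energy $\geq\varepsilon_0$ on $B_2\setminus B_1$.

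The missing idea is the paper's Case 2: a finite-energy harmonic spinor of this kind must vanish. In the half-plane case you first reflect across the flat boundary using the chirality condition. Then you remove the point singularities, extend conformally to $S^2$, and use that $S^2$ carries no nontrivial harmonic spinors. With this, your contradiction argument upgrades to smallness of $\int(e^{2u_n}+|\psi_n|^4)$ on every dyadic half-annulus of the neck, which is exactly \eqref{eq:neck-unit-log-small}. Your Ding--Tian step, including the transition annulus, then goes through as in the paper.

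Alternatively, on the long annuli you could bypass the Liouville theorem by choosing the cut radii among dyadic shells with $\int|\psi_n|^4<\eta$. At most $C\eta^{-1}$ shells fail this condition. Shifting each cut by $O(\eta^{-1})$ shells costs only $O(\eta^{-1})$ times the dyadic $e^{2u_n}$-bound, which can be made small after fixing $\eta$. The transition region at scale $d_n$ would still need a separate argument.
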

	\begin{proof}
		For convenience, we write
		$$
		B_r^+(x_n):=B_r(x_n)\cap B^+.
		$$
		We divide the proof into following two steps.
		
		\
		
		\noindent\textbf{Step 1:}	We  claim that for $\forall \epsilon>0$, there exist $R_0=R_0(\epsilon)>100$, $\delta_0=\delta_0(\epsilon)<\frac{1}{100}$ such that for any $R>R_0$ and $\delta<\delta_0$, when $n$
		is big enough, we have
		\begin{equation}\label{eq:neck-unit-log-small}
			\int_{B_{2r}^+(x_n)\setminus B_{r}^+(x_n)}
			\left(
			e^{2u_n}+|\psi_n|^4
			\right)\,dx
			<\epsilon,\ \ \forall\ \lambda_nR\leq r\leq \delta.
		\end{equation}

		\
		
		We first have the following two facts.
		
		\
		
		\noindent\textbf{Fact 1:} Fix $T>100$, we have \begin{equation}\label{eq:neck-outer-end-small}
			\lim_{\delta\to 0}\lim_{n\to\infty}
			\int_{B_\delta^+(x_n)\setminus B_{T^{-1}\delta }^+(x_n)}
			\left(
			e^{2u_n}+|\psi_n|^4
			\right)\,dx
			=0.
		\end{equation}
		
		\
		
		In fact, since 	$x_n\to 0$, for $\forall \delta>0$, when $n$ is sufficiently
		large, we have
		$$
		B_\delta^+(x_n)\setminus B_{T^{-1}\delta}^+(x_n)
		\subset
		B_{2\delta}^+(0)\setminus
		B_{\frac{1}{2}T^{-1}\delta}(0).
		$$
		Noting that
		$$
		(u_n,\psi_n)\to(u,\psi)
		\quad\mbox{in}\quad
		C^2_{\mathrm{loc}}(\overline{B^+}\setminus\{0\}),
		$$
		and $(u,\psi)$ has finite energy, we obtain
		\begin{align*}
			\lim_{\delta\to0}
			\limsup_{n\to\infty}
			\int_{B_\delta^+(x_n)\setminus B_{T^{-1}\delta }^+(x_n)}
			\left(
			e^{2u_n}+|\psi_n|^4
			\right)\,dx
			=\lim_{\delta\to0}
			\int_{B_\delta^+(0)\setminus B_{T^{-1}\delta }^+(0)}
			\left(
			e^{2u}+|\psi|^4
			\right)\,dx=0.
		\end{align*}
		
		\
		
		\noindent\textbf{Fact 2:} Fix $T>100$, we have \begin{equation}\label{eq:neck-inner-end-small}
			\lim_{R\to \infty}\lim_{n\to\infty}
			\int_{B_{T\lambda_nR}^+(x_n)\setminus B_{\lambda_nR }^+(x_n)}
			\left(
			e^{2u_n}+|\psi_n|^4
			\right)\,dx
			=0.
		\end{equation}
		
		\	
		
		In fact, by Lemma \ref{lem:distance-to-boundary-scale}, we have
		$$
		\frac{d_n}{\lambda_n}\to+\infty.
		$$
		Then it is easy to see that, passing to a subsequence, we get
		$$
		(v_n,\zeta_n)\to(v,\zeta)
		\quad\mbox{in}\quad
		C^2_{\mathrm{loc}}(\mathbb R^2),
		$$
		where $(v,\zeta)$ is a nontrivial solution of the
		super-Liouville equation on $\mathbb R^2$ with  finite energy
		$$
		\int_{\mathbb R^2}
		\left(
		e^{2v}+|\zeta|^4
		\right)\,dx<+\infty.
		$$
		
		Therefore, we obtain
		\begin{align*}
			\lim_{R\to\infty}\lim_{n\to\infty}
			\int_{B^+_{T\lambda_n R}(x_n)\setminus B^+_{\lambda_n R}(x_n)}
			\left(
			e^{2u_n}+|\psi_n|^4
			\right)\,dx=	\lim_{R\to\infty}
			\int_{B_{TR}\setminus B_R}
			\left(
			e^{2v}+|\zeta|^4
			\right)\,dx=0.
		\end{align*}
		
		\
		
		We now prove \eqref{eq:neck-unit-log-small} by a contradiction argument.
		Otherwise, there exist $\varepsilon_0>0$ and $r_n$ such that
		$
		\lambda_nR\leq r_n\leq\delta
		$
		and
		\begin{equation}\label{eq:neck-bad-annulus}
			\int_{B_{2r_n}^+(x_n)\setminus B_{r_n}^+(x_n)}
			\left(
			e^{2u_n}+|\psi_n|^4
			\right)\,dx
			\geq\varepsilon_0.
		\end{equation}
		By \eqref{eq:neck-outer-end-small} and
		\eqref{eq:neck-inner-end-small}, we have
		$$
		\frac{\delta}{r_n}\to+\infty,
		\qquad
		\frac{r_n}{R\lambda_n}\to+\infty.
		$$
		In particular,
		$$
		r_n\to0,
		\qquad
		\frac{\lambda_n}{r_n}\to0.
		$$
		
		Set
		$$
		\widehat u_n(x)
		:=
		u_n(x_n+r_nx)+\log r_n,
		\qquad
		\widehat\psi_n(x)
		:=
		\sqrt{r_n}\psi_n(x_n+r_nx)
		$$ and $$\widehat{\Omega}_n:=\{x\in\R^2 |x_n+r_nx\in B^+\}.$$
		Then \eqref{eq:neck-bad-annulus} becomes
		\begin{equation}\label{eq:neck-bad-annulus-scaled}
			\int_{\widehat{\Omega}_n\cap
				(B_2\setminus B_{1})}
			\left(
			e^{2\widehat u_n}
			+
			|\widehat\psi_n|^4
			\right)\,dx
			\geq\varepsilon_0.
		\end{equation}
		Passing to a subsequence, we may assume that
		$$
		\frac{d_n}{r_n}\to\mu\in[0,+\infty].
		$$
		Denote $$\R^2_\mu:=\{(s,t)\in\R^2\ |\ t> -\mu\}.$$
		
		\
		
		We divide the discussion according to following three cases.

		\
		
		\noindent \textbf{Case 1:}  There exists $T>100$ such that $(\widehat{u}_n,\widehat{\psi}_n)$ has a energy concentration point in $\widehat\Omega_n\cap (B_T(0)\setminus B_{T^{-1}(0)})$.
		
		\
		
		In this case, by a standard rescaling argument, we will get the second bubble which is a contradiction.

		\
		
		\noindent \textbf{Case 2:}  $\widehat{u}_n\to -\infty$ uniformly in any compact subset of $\widehat\Omega_n\cap (\R^2\setminus \{0\})$.
		
		\
		
		In this case, we get that $\widehat{\psi}_n\to \widehat{\psi}$ in $C^1(K)$ for any compact subset $K\subset \widehat\Omega_n\cap (\R^2\setminus \{0\})$ where $\widehat{\psi}$ satisfies
		\begin{equation}
			\begin{cases}
				\slashed{D}\widehat{\psi}=0\ \ &in\ \ \R^2_\mu\setminus \{0\},\\
				\mathbf{B}\widehat{\psi}=0\ \ &on\ \ \partial \R^2_\mu\setminus \{0\}.
			\end{cases}
		\end{equation}
		
		If $\mu=+\infty$, by the removable singularity result for
		Dirac-harmonic maps in \cite{Chen-Jost-Li-Wang-2005}, it extends
		conformally to a harmonic spinor on $S^2$. Since there is no
		nontrivial harmonic spinor on $S^2$, the limit spinor is identically
		zero. Therefore,
		$$
		\int_{\widehat{\Omega}_n\cap
			(B_2\setminus B_{1})}
		|\widehat\psi_n|^4\,dx
		\to0.
		$$
		Moreover,
		$$
		\int_{\widehat{\Omega}_n\cap
			(B_2\setminus B_{1})}
		e^{2\widehat u_n}\,dx
		\to0.
		$$
		This contradicts \eqref{eq:neck-bad-annulus-scaled}.
		
		If $\mu\in [0,+\infty)$, after translation and
		reflection across the flat boundary, we obtain a finite-energy
		harmonic spinor on $\mathbb R^2\setminus\{0\}$ which implies $\widehat{\psi}\equiv 0$. Similarly, we will get a contradiction.
		
		\
		
		\noindent \textbf{Case 3:}  $(\widehat{u}_n,\widehat{\psi}_n)\to (\widehat{u},\widehat{\psi})$ in $C^2(K)$ for any compact subset  $K\subset\widehat\Omega_n\cap (\R^2\setminus \{0\})$.
		
		\
		
		In this case, we first claim that $\mu=+\infty$. Otherwise,  $d_n/r_n\to\mu<+\infty$. Noting that
		$$
		\widehat u_n(s,-d_n/r_n)
		=
		u_0(s_n+r_ns,0)+\log r_n
		\to-\infty,
		$$ by Lemma \ref{lem:small-energy-regul}, we get that  $\widehat{u}_n\to -\infty$ uniformly in any compact subset of $\widehat\Omega_n\cap (\R^2\setminus \{0\})$. This is a contradiction.
		
		Since now $\mu=+\infty$, similar to the argument as in
		\cite[Theorem~1.2]{Jost-Zhou-Zhu-2014} and \cite[Theorem~1.2]{Jost-Wang-Zhou-Zhu-1} for interior case, we can show that the singularity at $\{0\}$ is removable for $(\widehat{u},\widehat{\psi})$. Then we get the second bubble, which is also a contradiction. 	Thus \eqref{eq:neck-unit-log-small} is proved.
		
		\
		
		\noindent\textbf{Step 2:} Energy identity for the spinor.
		
		\
		
		We now apply Ding-Tian's reduction argument
		\cite{DingWeiyueandTiangang}, following the proof of
		\cite[Theorem~1.2]{Jost-Zhou-Zhu-2014}. Choose $\theta>0$
		sufficiently small such that
		$$
		C_0(2\theta)^{\frac12}<\frac14,
		$$
		where $C_0$ is larger than the constants in
		Lemma~\ref{lem:boundary-annular-spinor-estimate} and the interior
		annular estimate in
		\cite[Lemma~3.1]{Jost-Wang-Zhou-Zhu-1}.
		
		Fix $0<\epsilon<\frac{\theta}{6}$. By
		\eqref{eq:neck-unit-log-small}, taking $R$ sufficiently large,
		$\delta$ sufficiently small and $n$ sufficiently large, we have
		$$
		\int_{B_{2r}^+(x_n)\setminus B_r^+(x_n)}
		\left(e^{2u_n}+|\psi_n|^4\right)\,dx<\epsilon,
		\qquad
		R\lambda_n\leq r\leq\delta.
		$$
		Since $d_n/\lambda_n\to+\infty$ and $d_n\to0$, when $n$ is big
		enough, we can divide the neck domain as
		\begin{align*}
			\mathcal N_{\delta,R,n}
			={}&
			\left(
			B_\delta^+(x_n)\setminus
			B_{\frac{\delta}{2}}^+((s_n,0))
			\right)
			\cup
			\left(
			B_{\frac{\delta}{2}}^+((s_n,0))\setminus
			B_{2d_n}^+((s_n,0))
			\right)
			\\
			&\cup
			\left(
			B_{2d_n}^+((s_n,0))\setminus B_{d_n}(x_n)
			\right)
			\cup
			\left(
			B_{d_n}(x_n)\setminus B_{R\lambda_n}(x_n)
			\right).
		\end{align*}
		
		For the first and the third parts, noting that
		$$
		B_\delta^+(x_n)\setminus
		B_{\frac{\delta}{2}}^+((s_n,0))
		\subset
		B_\delta^+(x_n)\setminus B_{\frac{\delta}{4}}^+(x_n)
		$$
		and
		$$
		B_{2d_n}^+((s_n,0))\setminus B_{d_n}(x_n)
		\subset
		B_{4d_n}^+(x_n)\setminus B_{d_n}^+(x_n),
		$$
		by \eqref{eq:neck-unit-log-small}, we have $$\int_{B_\delta^+(x_n)\setminus
		B_{\frac{\delta}{2}}^+((s_n,0))}|\psi_n|^4dx+ \int_{B_{2d_n}^+((s_n,0))\setminus B_{d_n}(x_n)}|\psi_n|^4dx\leq C\epsilon.$$
		
		For the second part, for any
		$2d_n\leq r\leq\frac{\delta}{2}$, we have
		$$
		B_{2r}^+((s_n,0))\setminus B_r^+((s_n,0))
		\subset
		B_{3r}^+(x_n)\setminus B_{\frac r2}^+(x_n).
		$$
		Thus, by \eqref{eq:neck-unit-log-small}, the energy on every unit
		logarithmic annulus centered at $(s_n,0)$ is bounded by
		$3\epsilon$. Similar to \cite{DingWeiyueandTiangang}, we divide
		the second part into finitely many annular parts, all but possibly
		the last one having $e^{2u_n}$-energy equal to $\theta$. The number
		of these parts is bounded independently of $n$ by the uniform
		energy bound. Applying
		Lemma~\ref{lem:boundary-annular-spinor-estimate} to each annular
		part, we obtain
		$$
		\left(\int_A|\psi_n|^4\,dx\right)^{\frac14}
		\leq
		C_0(2\theta)^{\frac12}
		\left(\int_A|\psi_n|^4\,dx\right)^{\frac14}
		+C\epsilon^{\frac14}
		\leq
		\frac14
		\left(\int_A|\psi_n|^4\,dx\right)^{\frac14}
		+C\epsilon^{\frac14}.
		$$
		Hence the spinor energy on the second part is bounded by
		$C\epsilon$.
		
		Finally, since $B_{d_n}(x_n)\subset B^+$, applying the interior
		annular estimate in
		\cite[Lemma~3.1]{Jost-Wang-Zhou-Zhu-1} and the same reduction
		argument, we obtain
		$$
		\int_{B_{d_n}(x_n)\setminus B_{R\lambda_n}(x_n)}
		|\psi_n|^4\,dx
		\leq C\epsilon.
		$$
		Combining the above estimates, we get
		$$
		\int_{\mathcal N_{\delta,R,n}}|\psi_n|^4\,dx
		\leq C\epsilon.
		$$
		Therefore,
		$$
		\lim_{\delta\to0}
		\lim_{R\to+\infty}
		\limsup_{n\to\infty}
		\int_{\mathcal N_{\delta,R,n}}|\psi_n|^4\,dx
		=0.
		$$
		This proves the proposition.
	\end{proof}
	
	\
	
Now, we prove our main Theorem \ref{thm:main}.
	\begin{proof}[\textbf{Proof of Theorem~\ref{thm:main}}]
		By Lemma~\ref{lem:compactness-away-blowup}, $\Sigma$ is a finite point
		set and
		$
		\Sigma\cap\Omega=\emptyset.
		$
		Next we will show $\Sigma=\emptyset$.  If not, since $\Sigma$ is finite, without loss of generality, we assume that
		$\Sigma=\{0\}\subset\partial\Omega$ and  $\overline{B_1^+}\subset\overline{\Omega}$. By Lemma \ref{lem:compactness-away-blowup}, we know that passing to a subsequence,
		$$
		(u_n,\psi_n)\to(u,\psi)
		\quad\mbox{in}\quad
		C^2_{\mathrm{loc}}
		\left(
		\overline{B_1^+}\setminus\{0\}
		\right),
		$$
		with finite energy
		\begin{equation}\label{equat:04}		
			\int_{B^+}(e^{2u}+|\psi|^4+|\nabla\psi|^{\frac{4}{3}})\,dx\leq C.
		\end{equation}
		
		Let $x_n,\lambda_n,d_n,v_n$ and $\zeta_n$ be defined as above.
		By Lemma~\ref{lem:distance-to-boundary-scale}, we have
		$$
		\frac{d_n}{\lambda_n}\to+\infty.
		$$
		Therefore,  passing to a subsequence,
		$$
		(v_n,\zeta_n)\to(v,\zeta)
		\quad\mbox{in}\quad
		C^2_{\mathrm{loc}}(\mathbb R^2),
		$$
		where $(v,\zeta)$ is a nontrivial finite-energy solution of the
		super-Liouville equation on $\mathbb R^2$. By classification result in
		\cite[Proposition~6.3]{Jost-Wang-Zhou}, we have
		$$
		\int_{\mathbb R^2}
		\left(
		2e^{2v}-e^v|\zeta|^2
		\right)dx
		=
		4\pi.
		$$

		Without loss of generality, we assume there is only one bubble. By energy identity Proposition \ref{prop:neck-spinor-energy-vanishing}, we have
		\begin{align*}
			&		\lim_{n\to\infty}
			\int_{B_r^+(0)}
			\left(
			2e^{2u_n}-e^{u_n}|\psi_n|^2
			\right)dx\\ &=\lim_{\delta\to0}\lim_{n\to\infty}\int_{B_r^+(0)\setminus B^+_\delta(0)}	\left(	2e^{2u_n}-e^{u_n}|\psi_n|^2	\right)dx+  \lim_{R\to\infty}\lim_{\delta\to0}\lim_{n\to\infty}\int_{ B^+_\delta(0)\setminus B^+_{\lambda_nR}(x_n)}	\left(	2e^{2u_n}-e^{u_n}|\psi_n|^2	\right)dx  \\ &\quad +\lim_{R\to \infty}\lim_{n\to\infty}\int_{B^+_{\lambda_nR}(x_n)}	\left(	2e^{2u_n}-e^{u_n}|\psi_n|^2	\right)dx\\ &\geq\lim_{\delta\to0}\lim_{n\to\infty}\int_{B_r^+(0)\setminus B^+_\delta(0)}	\left(	2e^{2u_n}-e^{u_n}|\psi_n|^2	\right)dx+  \lim_{R\to\infty}\lim_{\delta\to0}\lim_{n\to\infty}\int_{ B^+_\delta(0)\setminus B^+_{\lambda_nR}(x_n)}	\left(	-e^{u_n}|\psi_n|^2	\right)dx  \\ &\quad +\lim_{R\to \infty}\lim_{n\to\infty}\int_{B^+_{\lambda_nR}(x_n)}	\left(	2e^{2u_n}-e^{u_n}|\psi_n|^2	\right)dx\\
			&=\int_{B_r^+(0)}	\left(	2e^{2u}-e^{u}|\psi|^2	\right)dx+ \int_{\R^2}	\left(	2e^{2v}-e^{v}|\zeta|^2	\right)dx
		\end{align*} which immediately implies
		\begin{equation}\label{equat:05}
			\lim_{r\to 0}\lim_{n\to\infty}
			\int_{B_r^+(0)}
			\left(
			2e^{2u_n}-e^{u_n}|\psi_n|^2
			\right)dx\geq 4\pi.
		\end{equation}

		By	Lemma~\ref{lem:boundary-pohozaev-identity}, we have
		\begin{align*}
			&\int_{B_r^+}
			\left(
			2e^{2 u_n}
			-
			e^{ u_n}|\psi_n|^2
			\right)\,dx
			\notag\\
			&\quad =
			r\int_{\partial^+B_r^+}
			\left(
			\left|\frac{\partial u_n}{\partial\nu}\right|^2
			-\frac12|\nabla  u_n|^2
			+e^{2  u_n}
			\right)\,d\sigma
			-\frac14
			\int_{\partial^+B_r^+}
			\left\langle
			\frac{\partial \psi_n}{\partial\nu},
			(x+\bar x)\cdot \psi_n
			\right\rangle\,d\sigma
			\notag\\
			&\qquad
			-\frac14
			\int_{\partial^+B_r^+}
			\left\langle
			(x+\bar x)\cdot \psi_n,
			\frac{\partial \psi_n}{\partial\nu}
			\right\rangle\,d\sigma
			+\int_{\partial^0B_r^+}
			s\,\frac{\partial u_0}{\partial s}(s,0)\,
			\frac{\partial  u_n}{\partial\nu}(s,0)\,ds .
		\end{align*}

		By \eqref{equat:04} and Fubini's theory, it is easy to see that there exists a sequence $r_j\to0$
		such that
		$$
		\begin{aligned}
			\lim_{j\to\infty}\lim_{n\to\infty}\bigg(r_j\int_{\partial^+B^+_{r_j}}e^{2u_n}d\sigma
			-\frac14
			\int_{\partial^+B_{r_j}^+}
			\left\langle
			\frac{\partial\psi_n}{\partial\nu},
			(x+\bar x)\cdot\psi_n
			\right\rangle\,d\sigma-\frac14
			\int_{\partial^+B_{r_j}^+}
			\left\langle
			(x+\bar x)\cdot\psi_n,
			\frac{\partial\psi_n}{\partial\nu}
			\right\rangle\,d\sigma\bigg)
			=0.
		\end{aligned}
		$$

		Combining this with  Lemma~\ref{lem:boundary-radial-gradient-vanishing} and Lemma~\ref{lem:boundary-decay-estimates},
		we get
		$$
		\lim_{j\to\infty}
		\lim_{n\to\infty}		
		\int_{B_{r_j}^+}
		\left(
		2e^{2u_n}-e^{u_n}|\psi_n|^2
		\right)dx		=		0.
		$$
		This contradicts \eqref{equat:05}.		Therefore,
		$$
		\Sigma=\emptyset.
		$$
		This proves Theorem~\ref{thm:main}.
	\end{proof}
	
	\providecommand{\bysame}{\leavevmode\hbox to3em{\hrulefill}\thinspace}
	\providecommand{\MR}{\relax\ifhmode\unskip\space\fi MR }
	\providecommand{\MRhref}[2]{%
		\href{http://www.ams.org/mathscinet-getitem?mr=#1}{#2}
	}
	\providecommand{\href}[2]{#2}

\end{document}